\author{Luca Martinazzi\thanks{Department of Mathematics, ETH Zurich. E.mail: \texttt{luca@math.ethz.ch}}}
\date{April 29, 2008}
\title{Conformal metrics on $\R{2m}$ with constant $Q$-curvature}
\newtheorem{trm}{Theorem}
\newtheorem{prop}[trm]{Proposition}
\newtheorem{cor}[trm]{Corollary}
\newtheorem{lemma}[trm]{Lemma}
\newtheorem{defin}[trm]{Definition}
\newtheorem{open}{Open Question}
\newcommand{\R}[1]{\mathbb{R}^{#1}}
\newcommand{\ve}{\varepsilon}
\newcommand{\bs}{\backslash}
\newcommand{\M}[1]{\mathcal{#1}}
\newenvironment{proof}{\noindent\emph{Proof.}}{\hfill$\square$\medskip}
\newenvironment{ex}{\medskip\noindent\emph{Example.}}{\medskip}
\DeclareMathOperator{\interior}{int}
\DeclareMathOperator{\loc}{loc}
\DeclareMathOperator*{\Intm}{\int\!\!\!\!\!\! \rule[2.6pt]{6.5pt}{.4pt}}
\begin{document}
\maketitle

\begin{abstract}
We study the conformal metrics on $\R{2m}$ with constant $Q$-curvature $Q\in\R{}$ having finite volume, particularly in the case $Q\leq 0$. We show that when $Q<0$ such metrics exist in $\R{2m}$ if and only if $m>1$. Moreover we study their asymptotic behavior at infinity, in analogy with the case $Q>0$, which we treated in a recent paper. When $Q=0$, we show that such metrics have the form $e^{2p}g_{\R{2m}}$, where $p$ is a polynomial such that $2\leq \deg p\leq 2m-2$ and $\sup_{\R{2m}}p<+\infty$. In dimension $4$, such metrics are exactly the polynomials $p$ of degree $2$ with $\lim_{|x|\to+\infty}p(x)=-\infty$.
\end{abstract}

\section{Introduction and statement of the main theorems}

Given a constant $Q\in\R{}$, we consider the solutions to the equation
\begin{equation}
(-\Delta)^m u=Qe^{2mu} \quad \textrm{on }\R{2m},\label{eq0}
\end{equation}
satisfying
\begin{equation}\label{area}
\alpha:=\frac{1}{|S^{2m}|}\int_{\R{2m}} e^{2mu(x)}dx<+\infty.
\end{equation}
Geometrically, if $u$ solves \eqref{eq0} and \eqref{area}, then the conformal metric $g:=e^{2u}g_{\R{2m}}$ has $Q$-curvature $Q_g^{2m}\equiv  Q$ and volume $\alpha|S^{2m}|$. For the definition of the $Q$-curvature and related remarks, we refer to \cite{mar}. Notice that given a solution $u$ to \eqref{eq0} and $\lambda>0$, the function $v:=u-\frac{1}{2m}\log \lambda$ solves
$$(-\Delta)^m v=\lambda Q e^{2mv} \quad \textrm{in }\R{2m},$$
hence what matters is just the sign of $Q$, and we can assume without loss of generality that $Q\in\{0,\pm(2m-1)!\}$.

Every solution to \eqref{eq0} is smooth. When $Q=0$, that follows from standard elliptic estimates; when $Q\neq 0$ the proof is a bit more subtle, see \cite[Corollary 8]{mar}.

For $Q\geq 0$, some explicit solutions to \eqref{eq0} are known. For instance every polynomial of degree at most $2m-2$ satisfies \eqref{eq0} with $Q=0$, and the function $u(x)=\log\frac{2}{1+|x|^2}$ satisfies \eqref{eq0} with $Q=(2m-1)!$ and $\alpha=1$. This latter solution has the property that $e^{2u}g_{\R{2m}}=(\pi^{-1})^*g_{S^{2m}}$, where $\pi:S^{2m}\to\R{2m}$ is the stereographic projection.

For the negative case, we notice that the function $w(x)=\log\frac{2}{1-|x|^2}$ solves $(-\Delta)^m w=-(2m-1)!e^{2m w}$ on the unit ball $B_1\subset\R{2m}$ (in dimension $2$ this corresponds to the Poincar\'e metric on the disk). However, no explicit entire solution to \eqref{eq0} with $ Q<0$ is known, hence one can ask whether such solutions actually exist. In dimension $2$ ($m=1$) it is easy to see that the answer is negative, but quite surprisingly the situation is different in dimension $4$ and higher and we have:

\begin{trm}\label{exist} Fix $Q<0$. For $m=1$ there is no solution to \eqref{eq0}-\eqref{area}. For every $m\geq 2$, there exist (several) radially symmetric solutions to \eqref{eq0}-\eqref{area}.
\end{trm}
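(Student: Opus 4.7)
\emph{Nonexistence for $m=1$.} Suppose by contradiction that $u$ solves \eqref{eq0}-\eqref{area} with $Q<0$. Then $\Delta u=-Qe^{2u}\ge 0$, so $u$ is subharmonic, and the identity $\Delta(e^{2u})=2e^{2u}(\Delta u+2|\nabla u|^2)\ge 0$ shows that $e^{2u}$ is subharmonic as well. The sub-mean inequality on any disk $B_R(x_0)\subset\R{2}$ yields
\[
e^{2u(x_0)}\;\le\;\frac{1}{\pi R^2}\int_{B_R(x_0)}e^{2u}\,dy\;\le\;\frac{1}{\pi R^2}\int_{\R{2}}e^{2u}\,dy\;\xrightarrow[R\to\infty]{}\;0
\]
by \eqref{area}, contradicting $e^{2u(x_0)}>0$.

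\emph{Strategy for $m\ge 2$.} Normalize $Q=-(2m-1)!$. The key structural observation is that $(-\Delta)^m$ annihilates every polynomial of degree $\le 2m-1$, which for $m\ge 2$ (and only then) includes nontrivial radial polynomials vanishing at $0$ and tending to $-\infty$ at $\infty$, for example $p(x)=-\eta|x|^{2k}$ with $1\le k\le m-1$ and $\eta>0$. I would therefore make the ansatz $u=c_0+p+v$ with $v$ radial and at most polynomially growing. Because $(-\Delta)^m p=0$, equation \eqref{eq0} reduces to
\begin{equation}\label{reduced}
(-\Delta)^m v=(-1)^{m+1}(2m-1)!\,e^{2mc_0}\,e^{-2m\eta|x|^{2k}}\,e^{2mv}.
\end{equation}
The weight $e^{-2m\eta|x|^{2k}}$ decays faster than any polynomial, so that \eqref{area} is automatic once $v$ is under control; this is precisely the mechanism unavailable in dimension two, where the polynomial kernel of $-\Delta$ reduces to affine functions.

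\emph{Construction and main obstacle.} I would then solve \eqref{reduced} by a radial shooting argument. In the radial variable $r=|x|$ the equation is a $2m$-th order ODE, and smoothness at $r=0$ reduces the Cauchy data to the $m$ values $u(0),\Delta u(0),\dots,\Delta^{m-1}u(0)$; standard theory gives local existence and continuous dependence. For each admissible pair $(k,\eta)$ one shoots in these parameters to select a trajectory that is defined on all of $[0,\infty)$ and has the prescribed asymptotic $u(r)\sim-\eta r^{2k}$; varying $k\in\{1,\dots,m-1\}$ and $\eta>0$ produces the ``several'' solutions in the statement. An alternative would be to invert $(-\Delta)^m$ against its radial fundamental solution and run a Banach fixed point in a weighted space of radial functions, the contraction being supplied by the decay of $e^{-2m\eta|x|^{2k}}$ in \eqref{reduced}. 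The main obstacle is precisely this selection step: showing global existence on $[0,\infty)$ together with preservation of the prescribed leading asymptotic (without which \eqref{area} may fail) requires tracking the signs and growth of the iterated Laplacians $\Delta^j u$ along the flow, and since $\Delta^m u=(-1)^{m+1}(2m-1)!\,e^{2mu}$ changes sign with the parity of $m$, the corresponding monotonicity arguments have to be threaded through $m$ layers with some care.
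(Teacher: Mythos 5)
Your nonexistence argument for $m=1$ is correct and is a clean variant of the paper's: the paper averages $u$ over spheres and applies Jensen's inequality to get $\int_{\partial B_R}e^{2u}d\sigma\geq 2\pi Re^{2u(0)}$ before integrating in $R$, while you observe directly that $e^{2u}$ is subharmonic and invoke the sub-mean value inequality; both are fine.

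For $m\geq 2$, however, there is a genuine gap, and you have named it yourself: everything you write up to the ``main obstacle'' paragraph is set-up (the correct set-up --- the availability of $m$-polyharmonic polynomials like $-\eta|x|^{2k}$ that force integrability of $e^{2mu}$ is indeed the mechanism that distinguishes $m\geq 2$ from $m=1$), but the actual content of the theorem is precisely the global existence of the radial trajectory together with an upper bound of the form $u(r)\leq -\delta r^2+C$ that yields \eqref{area}, and this is left unproved. The paper closes this gap with a barrier argument rather than by prescribing asymptotics: it solves the initial value problem for $\Delta^m u=\mp(2m-1)!e^{2mu}$ (the sign chosen according to the parity of $m$, exactly the issue you flag) with data $u^{(2j+1)}(0)=0$, $u^{(2j)}(0)=\alpha_j\leq 0$, $u''(0)=\alpha_1<0$, and compares $u$ with explicit super- and subsolutions $w_\pm$. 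The key technical device is to propagate the ordering through the $m$ layers of Laplacians: setting $g_+=w_+-u$ with $w_+=\frac{\alpha_1}{2}r^2$, one has $\Delta^m g_+\geq 0$ and, by the lemma $\Delta^jg(0)=b_jg^{(2j)}(0)$ with $b_j>0$, also $\Delta^jg_+(0)\geq 0$; the divergence theorem $\int_{B_R}\Delta^jg_+\,dx=\int_{\partial B_R}\frac{d}{dr}\Delta^{j-1}g_+\,d\sigma$ then gives inductively $\Delta^jg_+\geq 0$ for all $j\leq m-1$ and hence $g_+\geq 0$ as long as the solution exists. A lower barrier built from $\sum\beta_ir^{2i}$ and $A\log\frac{2}{1+r^2}$ (with $A$ large, using $u\leq w_+$) prevents blow-down, and the two-sided bound forces global existence by standard ODE theory; the upper barrier alone then gives $\int_{\R{2m}}e^{2mu}dx\leq\int_{\R{2m}}e^{m\alpha_1|x|^2}dx<\infty$. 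Note also that this avoids the need to ``preserve a prescribed leading asymptotic'': only a one-sided quadratic bound is required. Finally, a small sign slip: with $u=c_0+p+v$ and $(-\Delta)^mp=0$, linearity gives $(-\Delta)^mv=-(2m-1)!e^{2mc_0}e^{-2m\eta|x|^{2k}}e^{2mv}$ with no factor $(-1)^{m+1}$; that factor belongs on the right only if you write $\Delta^m v$ on the left.
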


Having now an existence result, we turn to the study of the asymptotic behavior at infinity of solutions to \eqref{eq0}-\eqref{area} when $m\geq 2$, $Q<0$, having in mind applications to concentration-compactness problems in conformal geometry. To this end, given a solution $u$ to \eqref{eq0}-\eqref{area}, we define the auxiliary function
\begin{equation}\label{eqv}
v(x):= -\frac{(2m-1)!}{\gamma_m}\int_{\R{2m}}\log\bigg(\frac{|y|}{|x-y|}\bigg) e^{2m u(y)}dy,
\end{equation}
where $\gamma_m :=\omega_{2m} 2^{2m-2}[(m-1)!]^2$ is characterized by the following property:
$$(-\Delta)^m\Big(\frac{1}{\gamma_m}\log\frac{1}{|x|}\Big)=\delta_0\quad\textrm{in }\R{2m}.$$
Then $(-\Delta)^m v=-(2m-1)!e^{2mu}.$ We prove

\begin{trm}\label{clas1} Let $u$ be a solution of \eqref{eq0}-\eqref{area} with $Q=-(2m-1)!$.
Then
\begin{equation}\label{repr}
u(x)=v(x)+p(x),
\end{equation}
where $p$ is a non-constant polynomial of even degree at most $2m-2$. Moreover there exist a constant $a\neq 0$, an integer $1\leq j\leq m-1$ and a closed set $Z\subset S^{2m-1}$ of Hausdorff dimension at most $2m-2$ such that for every compact subset $K\subset S^{2m-1}\backslash Z$ we have
\begin{eqnarray}
\lim_{t\to+\infty}\Delta^\ell v(t\xi)&=&0,\quad \ell=1,\ldots, m-1, \nonumber\\
v(t\xi)&=&2\alpha\log t+o(\log t),\textrm{ as }t\to+\infty, \nonumber\\
\lim_{t\to+\infty}\Delta^j u(t\xi)&=& a, \label{deltaa}
\end{eqnarray}
for every $\xi\in K$ uniformly in $\xi$. If $m=2$, then $Z=\emptyset$ and $\sup_{\R{2m}} u<+\infty$.
Finally
\begin{equation}\label{Rg}
\liminf_{|x|\to +\infty}R_{g_u}(x)= -\infty,
\end{equation}
where $R_{g_u}$ is the scalar curvature of $g_u:=e^{2u}g_{\R{2m}}$.

\end{trm}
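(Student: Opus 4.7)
\emph{Proof plan.} By the normalization of $\gamma_m$ we have $(-\Delta)^m v = -(2m-1)!\,e^{2mu}$, so $p := u - v$ satisfies $\Delta^m p = 0$ on $\R{2m}$. The approach mirrors the $Q>0$ analysis in \cite{mar}, the main new difficulty being that $u$ is no longer bounded above. The plan is to analyze $v$ asymptotically, deduce that $p$ is a polynomial with restricted structure, and then derive \eqref{deltaa} and \eqref{Rg}.

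\emph{Asymptotics of $v$.} Writing $\Delta^\ell v(x) = c_{m,\ell}\int_{\R{2m}}\frac{e^{2mu(y)}}{|x-y|^{2\ell}}\,dy$ for $1\leq\ell\leq m-1$ (with $c_{m,\ell}\neq 0$), dominated convergence gives $\Delta^\ell v(t\xi)\to 0$ as $t\to+\infty$ whenever $t\xi$ avoids concentration sets of the measure $e^{2mu(y)}dy$. The set $Z\subset S^{2m-1}$ where this fails is controlled in Hausdorff dimension by $2m-2$ via a covering argument analogous to the one carried out in \cite{mar} for $Q>0$. Splitting the integral in \eqref{eqv} into $\{|y|\leq|x|/2\}$, $\{|x-y|\leq|x|/2\}$ and the remainder, a similar analysis of the log kernel yields $v(t\xi) = 2\alpha\log t + o(\log t)$ for $\xi\notin Z$, the coefficient coming from $(2m-1)!\,|S^{2m}|/\gamma_m = 2$. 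This step is the technical heart of the proof.

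\emph{Structure of $p$.} Logarithmic control of $v$ together with a polynomial growth bound on $u$ extracted from the equation yields polynomial growth of $p$, so a Liouville theorem for polyharmonic functions of polynomial growth forces $p$ to be a polynomial. Integrability $\int e^{2mu}<+\infty$ and $v(t\xi)\sim 2\alpha\log t$ together force $p(t\xi)\to -\infty$ for every $\xi\notin Z$, and by continuity the top homogeneous part $p_d$ satisfies $p_d\leq 0$ on all of $S^{2m-1}$. The polyharmonic decomposition $p_d = \sum_{k=0}^{m-1}|x|^{2k}H_{d-2k}(x)$ with $H_j$ harmonic homogeneous of degree $j$ shows that the spherical mean of $p_d$ on $S^{2m-1}$ vanishes unless $d$ is even and $d\leq 2m-2$, in which case it is a positive multiple of the coefficient of $|x|^d$. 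Combined with $p_d\leq 0$ and $p_d\not\equiv 0$, this forces $d=2j$ with $1\leq j\leq m-1$ and $a := \Delta^j p < 0$; moreover $p$ is non-constant, since otherwise $\int e^{2mv}=+\infty$ via $v\sim 2\alpha\log t$. Then $\Delta^j u = \Delta^j v + \Delta^j p\to 0+a$ along $\xi\notin Z$, yielding \eqref{deltaa}.

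\emph{Case $m=2$ and scalar curvature.} For $m=2$ we get $d=2$, and $p_2\leq 0$ together with $a<0$ forces the quadratic form $p_2$ to be negative definite, so $p(x)\to -\infty$ uniformly in direction; hence $Z=\emptyset$ and $\sup u=\sup(v+p)<+\infty$. For \eqref{Rg}, use the conformal formula
\[
R_{g_u}=-e^{-2u}\bigl(2(2m-1)\Delta u + (2m-1)(2m-2)|\nabla u|^2\bigr).
\]
Along any $\xi\notin Z$ with $p_d(\xi)<0$, $u(t\xi)\to -\infty$ polynomially while $|\nabla u(t\xi)|$ grows polynomially in $t$, so $e^{-2u}|\nabla u|^2\to +\infty$ and the $\Delta u$ term is of lower order; hence $R_{g_u}(t\xi)\to -\infty$, giving \eqref{Rg}.
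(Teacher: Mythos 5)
Your plan has the right overall shape (write $p=u-v$, show $p$ is a polynomial, read off the asymptotics), but it inverts the logical order of the two main steps in a way that leaves a genuine gap, and two of your intermediate claims are unsupported. First, you propose to establish the asymptotics of $v$ \emph{before} analyzing $p$, with the exceptional set $Z$ defined as a ``concentration set of the measure $e^{2mu}dy$'' whose Hausdorff dimension is bounded by a covering argument. No such argument is available: since $\sup_{\R{2m}}u<+\infty$ is not known here (it is listed as an open problem for $m\geq 3$), the density $e^{2mu}$ may be locally unbounded on an a priori large set, and $e^{2mu}\in L^1$ alone does not let you control $\int_{B_\sigma(x)}|x-y|^{-2\ell}e^{2mu}dy$ or $\int_{B_1(x)}\log|x-y|\,e^{2mu}dy$; a finite measure can fail to be locally bounded on sets of any dimension, so dominated convergence does not give $\Delta^\ell v(t\xi)\to 0$ off a small set. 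In the paper the order is reversed: one first proves only the one-sided bound $v\leq 2\alpha\log|x|+C$ and $(-v)^+\in L^1$, deduces that $p$ is a polynomial of degree $\leq 2m-2$ with leading homogeneous part $a_d\leq 0$ on $S^{2m-1}$, defines $Z:=\{a_d=0\}$ (an algebraic set, whose dimension $\leq 2m-2$ follows from the implicit function theorem applied to the strata where some $\nabla^{i+1}a_d\neq 0$), and only \emph{then} obtains the asymptotics of $v$ on cones over $K\subset S^{2m-1}\setminus Z$, using that $u=v+p\leq 2\alpha\log|x|-\delta|x|^2+C$ is bounded above there. Your plan is circular at exactly this point.

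Second, your Liouville step assumes ``a polynomial growth bound on $u$ extracted from the equation,'' hence two-sided polynomial growth of $p$; no such bound is available in the negative case. The paper's Liouville theorem needs only the one-sided inequality $p\leq u-v$ together with $e^{2mu}\in L^1$ and $(-v)^+\in L^1$: by Pizzetti's formula the averages $\Intm_{B_R}p$ are automatically $O(R^{2m-2})$, so it suffices to control $\Intm_{B_R}p^+\leq \Intm_{B_R}u^+ +C\Intm_{B_R}(-v)^+$, and then the gradient estimate $|D^{2m-1}p|\leq CR^{1-2m}\Intm_{B_R}|p|$ forces $D^{2m-1}p\equiv 0$. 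Finally, in the case $m=2$ your deduction ``$p_2\leq 0$ and $\Delta p<0$ imply $p_2$ negative definite'' is false ($p_2=-x_1^2$ is a counterexample); the correct argument, as in the paper, is that if some eigenvalue of $p_2$ were $\geq 0$ there would be an infinite-measure slab on which $p\geq -C$, and since $v\geq -C$ off a set of finite measure this contradicts $\int_{\R{4}}e^{4u}dx<\infty$. Your observations that $d$ must be even (oddness of $a_d$ would force $a_d\equiv 0$), that $\Delta^{d/2}p_d$ is a positive multiple of the spherical mean of $p_d$, and the sketch of \eqref{Rg} via the conformal scalar-curvature formula are sound, but they rest on the asymptotics of $v$ and the structure of $p$, which your plan does not actually establish.
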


Following the proof of Theorem \ref{exist}, it can be shown that the estimate on the degree of the polynomial is sharp.
Recently J. Wei and D. Ye \cite{WY} showed the existence of solutions to $\Delta^2 u=6e^{4u}$ in $\R{4}$ with $\int_{\R{4}}e^{4u}dx<+\infty$ which are not radially symmetric. It is plausible that also in the negative case non-radially symmetric solutions exist.

For the case $ Q=0$ we have

\begin{trm}\label{zero}
When $Q=0$, any solution to \eqref{eq0}-\eqref{area} is a polynomial $p$ with $2\leq \deg p\leq 2m-2$ and with
$$\sup_{\R{2m}}p<+\infty.$$
In particular in dimension $2$ (case $m=1$), there are no solutions. In dimension $4$ the solutions are exactly the polynomials of degree $2$ with $\lim_{|x|\to\infty}p(x)=-\infty$.
Finally, there exist $1\leq j\leq m-1$ and $a<0$ such that
\begin{equation}\label{aj}
\lim_{|x|\to\infty}\Delta^j p(x)=a.
\end{equation}
\end{trm}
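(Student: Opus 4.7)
The plan is to exploit the fact that when $Q=0$ the equation \eqref{eq0} reduces to $\Delta^m u=0$, so $u$ is smooth and $m$-polyharmonic, and to combine Pizzetti's mean value formula with Jensen's inequality in a descent that forces some iterated Laplacian to be a strictly negative constant. Once that is achieved, a polynomial-growth bound plus a Liouville-type theorem for polyharmonic functions will identify $u$ with a polynomial. The case $m=1$ is handled at once: the mean value property gives $u(x_0)=\intm_{B_r(x_0)}u$, and Jensen applied to $\int_{\R{2m}}e^{2mu}\leq C_0$ gives $\intm_{B_r(x_0)}u\leq -\log r + C_1$, so sending $r\to+\infty$ forces $u(x_0)=-\infty$, a contradiction; no solution exists in dimension $2$.

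For $m\geq 2$ I would write Pizzetti's formula
$$\intm_{B_r(x_0)} u\, dx = \sum_{k=0}^{m-1} c_{k,m}\, r^{2k}\, \Delta^k u(x_0),\qquad c_{k,m}>0,$$
combine it with the same Jensen bound, and let $r\to+\infty$: after dividing by $r^{2(m-1)}$ the leading term forces $\Delta^{m-1}u\leq 0$ everywhere. Since $\Delta^{m-1}u$ is harmonic and bounded above, Liouville yields $\Delta^{m-1}u\equiv a_{m-1}\leq 0$. If $a_{m-1}<0$ I set $j:=m-1$; otherwise $u$ is $(m-1)$-polyharmonic, Pizzetti has one fewer term, and the same step gives $\Delta^{m-2}u\leq 0$ constant. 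Iterating, the descent must halt at some $j\in\{1,\dots,m-1\}$ with $\Delta^j u\equiv a<0$, because if we ever reached a harmonic $u$ the $m=1$ argument would contradict integrability. This already proves \eqref{aj} trivially, since $\Delta^j u\equiv a$ everywhere.

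The main step is to show $u$ itself is a polynomial. Setting $\psi(x):=(a/C_{j,m})|x|^{2j}$ with $C_{j,m}:=\Delta^j(|x|^{2j})>0$ so that $\Delta^j\psi\equiv a$, the function $w:=u-\psi$ is $j$-polyharmonic. The plan is to establish the pointwise polynomial growth bound
$$u(x)\leq C(1+|x|)^{2j}\quad\text{for all }x\in\R{2m}$$
by combining the Pizzetti--Jensen inequality (with $\Delta^j u$ substituted by the known constant $a$), the elementary estimate $\int_{\R{2m}} u^+\leq (2m)^{-1}\int e^{2mu}<\infty$ coming from $u^+\leq (2m)^{-1}e^{2mu}$, and a bootstrap through interior $L^1$--$L^\infty$ estimates for the polyharmonic operator $\Delta^{j+1}$ applied to $w$ at different radii. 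With this growth in hand, the classical Liouville-type theorem for polyharmonic functions (via Almansi's decomposition $w=\sum_{k=0}^{j-1}|x|^{2k} h_k$ with $h_k$ entire harmonic, together with Liouville for harmonic functions with one-sided polynomial growth) forces $u$ to be a polynomial $p$. This polynomial-growth step is the main technical obstacle.

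Once $u=p$ is known to be a polynomial, $\Delta^j p\equiv a\neq 0$ forces $\deg p=2j\in\{2,\dots,2m-2\}$, and $\sup_{\R{2m}} p<+\infty$ follows from integrability: were the leading homogeneous part of $p$ positive along some $\omega\in S^{2m-1}$, then $e^{2mp}$ would blow up on a conical neighborhood of $\omega$, contradicting finite volume. In dimension $4$ ($m=2$) we must have $j=1$ and $\deg p=2$; integrability of $e^{4p}$ is equivalent to the quadratic part of $p$ being negative definite, equivalently to $\lim_{|x|\to+\infty} p(x)=-\infty$, and conversely every degree-$2$ polynomial with this property trivially satisfies $\Delta^2 p=0$ and $\int e^{4p}<+\infty$, yielding the sharp dimension-$4$ characterization.
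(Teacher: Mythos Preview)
Your descent via Pizzetti--Jensen is correct and attractive: it is self-contained and yields $\Delta^j u\equiv a<0$ with the right sign in one stroke, whereas the paper obtains the sign of $a$ only by citing an external argument. Where your approach diverges from the paper is in the order of operations and in the ``main step.'' The paper does \emph{not} perform the descent first; it simply observes that $\Delta^m u=0$, $u\leq u-0$, $e^{2mu}\in L^1(\R{2m})$, and applies its Liouville-type Theorem~\ref{trmliou2} (with $v\equiv 0$) to conclude immediately that $u$ is a polynomial of degree at most $2m-2$. Everything else---$\sup p<\infty$, the existence of $j$, the $m=2$ characterization---is then read off from the polynomial.

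Your route to ``$u$ is a polynomial'' is the part that is only sketched, and your proposed path (pointwise bound $u(x)\le C(1+|x|)^{2j}$, then Almansi) is more circuitous than necessary and not obviously executable: Pizzetti with the known constant $\Delta^j u=a$ still leaves the intermediate $\Delta^k u(x_0)$, $k<j$, undetermined, so a uniform pointwise growth bound does not fall out directly. The clean fix is essentially the paper's Theorem~\ref{trmliou2}, applied after your descent with polyharmonicity order $j{+}1$: for any $x_0$,
\[
|D^{2j+1}u(x_0)|\ \le\ \frac{C}{R^{2j+1}}\Intm_{B_R(x_0)}|u|\ =\ \frac{C}{R^{2j+1}}\Big(-\Intm_{B_R(x_0)}u+2\Intm_{B_R(x_0)}u^+\Big);
\]
Pizzetti gives $\Intm_{B_R(x_0)}u=O_{x_0}(R^{2j})$ while $u^+\le(2m)^{-1}e^{2mu}\in L^1$ gives $\Intm_{B_R(x_0)}u^+\to 0$, so $D^{2j+1}u\equiv 0$ and $u$ is a polynomial of degree at most $2j$. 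This replaces your pointwise-growth/Almansi step entirely and also supplies the missing half of your claim ``$\Delta^j p\equiv a\neq 0$ forces $\deg p=2j$'': that equation alone only gives $\deg p\ge 2j$ (there exist homogeneous $j$-polyharmonic polynomials of higher degree), and it is precisely the Liouville bound $\deg p\le 2j$ that closes the gap.

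In short: your argument is sound once the polynomial step is completed as above; the paper's version is shorter because it invokes the Liouville theorem at the outset and skips the descent, at the cost of outsourcing the proof that $a<0$.
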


The case when $Q>0$, say $Q=(2m-1)!$, has been exhaustively treated. The problem
\begin{equation}\label{positive}
(-\Delta)^m u=(2m-1)!e^{2mu}\quad\textrm{on }\R{2m},\quad \int_{\R{2m}}e^{2mu}dx<+\infty
\end{equation}
admits standard solutions, i.e. solutions of the form $u(x):=\log\frac{2\lambda}{1+\lambda^2|x-x_0|^2}$, $\lambda>0$, $x_0\in \R{2m}$ that arise from the stereographic projection and the action of the M\"obius group of conformal diffeomorphisms on $S^{2m}$. In dimension $2$ W. Chen and C. Li \cite{CL} showed that every solution to \eqref{positive} is standard. Already in dimension $4$, however, as shown by A. Chang and W. Chen \cite{CC}, \eqref{positive} admits non-standard solutions. In dimension $4$ C-S. Lin \cite{lin} classified all solutions $u$ to \eqref{positive} and gave precise conditions in order for $u$ to be a standard solution in terms of its asymptotic behavior at infinity. 

In arbitrary even dimension, A. Chang and P. Yang \cite{CY} proved that solutions of the form
$$u(x)=\log\frac{2}{1+|x|^2}+\xi(\pi^{-1}(x))$$
are standard, where $\pi:S^{2m}\to\R{2m}$ is the stereographic projection and $\xi$ is a smooth function on $S^{2m}$. J. Wei and X. Xu \cite{WX} showed that any solution $u$ to \eqref{positive} is standard under the weaker assumption that $u(x)=o(|x|^2)$ as $|x|\to\infty$, see also \cite{xu}. We recently treated the general case, see \cite{mar}, generalizing the work of C-S. Lin. In particular we proved a decomposition $u=p+v$ as in Theorem \ref{clas1} and gave various analytic and geometric conditions which are equivalent to $u$ being standard. 

The classification of the solutions to \eqref{positive} has been applied in concentration-compactness problems, see e.g. \cite{LS}, \cite{RS}, \cite{mal}, \cite{MS}, \cite{DR}, \cite{str1}, \cite{str2}, \cite{ndi}.  There is an interesting geometric consequence of Theorems \ref{clas1} and \ref{zero}, with applications in concentration-compactness: In the case of a closed manifold, metrics of equibounded volumes and prescribed $Q$-curvatures \emph{of possibly varying sign} cannot concentrate at points of negative or zero $Q$-curvature.
For instance we shall prove in a forthcoming paper \cite{mar2}

\begin{trm}\label{trmquant} Let $(M,g)$ be a $2m$-dimensional closed Riemannian manifold with Paneitz operator $P^{2m}_g$ satisfying $\ker P^{2m}_g=\{const\}$, and let $u_k:M\to \R{}$ be a sequence of solutions of 
\begin{equation}\label{deltaT}
P^{2m}_g u_k+ Q^{2m}_g=Q_k e^{2m u_k},
\end{equation}
where $Q^{2m}_g$ is the $Q$-curvature of $g$ (see e.g. \cite{cha}), and where the $Q_k$'s are given continuous functions with $Q_k\to Q_0$ in $C^0$. Assume also that there is a $\Lambda>0$ such that
\begin{equation}\label{vkT}
\int_{M}e^{2mu_k}\mathrm{dvol}_g \leq \Lambda,
\end{equation}
for all $k$. Then one of the following is true.
\begin{enumerate}
\item[(i)] For every $0\leq \alpha<1$, a subsequence is converging in $C^{2m-1,\alpha}(M)$. 
\item[(ii)] There exists a finite set $S=\{x^{(i)}:1\leq i\leq I\}$ such that $u_k\to -\infty$ in $L^\infty_{\loc}(M\bs S)$. Moreover
\begin{equation}\label{intQg}
\int_M Q_g \mathrm{dvol}_g = I(2m-1)!|S^{2m}|,
\end{equation}
and
\begin{equation}\label{QgL}
Q_k e^{2mu_k}\mathrm{dvol}_g \rightharpoonup\sum_{i=1}^I (2m-1)!|S^{2m}|\delta_{x^{(i)}},
\end{equation}
in the sense of measures. Finally $Q_0(x^{(i)})>0$ for $1\leq i\leq I$.
\end{enumerate}
\end{trm}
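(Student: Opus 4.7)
The plan is to follow the Brézis--Merle-style concentration-compactness scheme, adapted to the higher-order equation \eqref{deltaT}, with the classification theorems of the present paper supplying the blow-up profile analysis. The hypothesis $\ker P^{2m}_g = \{\mathrm{const}\}$ lets me invert $P^{2m}_g$ modulo constants and split $u_k = \bar u_k + \tilde u_k$ with $\int_M \tilde u_k\, \mathrm{dvol}_g = 0$, where $\tilde u_k$ is represented by a Green kernel $G$ of $P^{2m}_g$ with logarithmic diagonal singularity. Integrating \eqref{deltaT} against $1$ yields
$$\int_M Q_k e^{2m u_k}\, \mathrm{dvol}_g = \int_M Q^{2m}_g\, \mathrm{dvol}_g,$$
so the right-hand side of \eqref{deltaT} is uniformly bounded in $L^1(M)$.

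Define the concentration set
$$S := \Big\{x \in M : \limsup_{k\to\infty} \int_{B_r(x)} |Q_k| e^{2m u_k}\, \mathrm{dvol}_g > \varepsilon_0 \text{ for every } r > 0\Big\},$$
where $\varepsilon_0 > 0$ comes from an Adams--Brézis--Merle inequality for $P^{2m}_g$. Off $S$ this inequality upgrades the $L^1$ control of $Q_k e^{2m u_k}$ to $L^p_{\loc}$ control of $e^{2m(u_k - \bar u_k)}$ for some $p > 1$, and Schauder bootstrapping on \eqref{deltaT} then produces $C^{2m-1,\alpha}_{\loc}(M\setminus S)$ subsequential convergence of $u_k - \bar u_k$. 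If $S = \emptyset$ an $L^1$-balancing argument further controls $\bar u_k$, producing alternative (i); otherwise a potential-theoretic argument using $G$ forces $\bar u_k \to -\infty$, hence $u_k \to -\infty$ in $L^\infty_{\loc}(M\setminus S)$.

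At each $x^{(i)} \in S$ I would select $x_k \to x^{(i)}$ with $u_k(x_k) = \max_{B_\rho(x^{(i)})} u_k \to +\infty$ and rescale $v_k(y) := u_k(\exp_{x_k}(r_k y)) - u_k(x_k)$ with $r_k := e^{-u_k(x_k)}$, in normal coordinates. Since $P^{2m}_g - (-\Delta)^m$ is lower-order and disappears under this scaling, any subsequential limit $v_\infty$ solves
$$(-\Delta)^m v_\infty = Q_0(x^{(i)}) e^{2m v_\infty} \text{ on } \R{2m}, \quad v_\infty(0) = 0, \quad v_\infty \leq 0, \quad \int_{\R{2m}} e^{2m v_\infty}\,dx < +\infty.$$
Theorems \ref{clas1} and \ref{zero} then forbid $Q_0(x^{(i)}) \leq 0$: when $Q_0(x^{(i)}) = 0$, Theorem \ref{zero} makes $v_\infty$ a polynomial, and the rescaled mass $\int Q_k(x_k + r_k \cdot) e^{2m v_k}\,dy$ tends to $Q_0(x^{(i)}) \int e^{2m v_\infty} = 0$, so no mass concentrates at $x^{(i)}$ from this (or, by iteration, any) blow-up scale, contradicting $x^{(i)} \in S$; when $Q_0(x^{(i)}) < 0$, the decomposition $v_\infty = v + p$ of Theorem \ref{clas1} together with the asymptotics \eqref{deltaa}--\eqref{Rg} is incompatible with the global maximum constraint $v_\infty \leq 0 = v_\infty(0)$ (in dimension $4$ this is immediate from $\sup v_\infty < +\infty$ combined with the prescribed polynomial form). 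Hence $Q_0(x^{(i)}) > 0$, and the positive-case classification of \cite{mar} identifies each bubble as carrying mass $(2m-1)!|S^{2m}|$ in the weak limit of $Q_k e^{2m u_k}\, \mathrm{dvol}_g$.

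The main obstacle is proving that the \emph{total} mass collected at $x^{(i)}$, summed over all bubble-tree levels and across possible ``neck'' regions between concentration scales, is exactly one quantum $(2m-1)!|S^{2m}|$ with no defect. This is the higher-order analogue of the classical no-neck problem and typically requires Pohozaev-type identities, fine pointwise neck estimates, and an induction on the number of concentration scales, in the spirit of Malchiodi--Struwe and Druet--Robert. Once this is in hand, \eqref{intQg} follows by passing to the limit in the a priori identity above, and \eqref{QgL} follows from $u_k \to -\infty$ in $L^\infty_{\loc}(M\setminus S)$.
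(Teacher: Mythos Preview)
The paper does not prove Theorem~\ref{trmquant}: it is stated there as a result to appear in the forthcoming paper~\cite{mar2}, with Theorems~\ref{clas1} and~\ref{zero} advertised as the classification input. So there is no in-paper proof to compare your proposal against.

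That said, your exclusion of $Q_0(x^{(i)})<0$ has a genuine gap. You assert that the maximum constraint $v_\infty\leq 0=v_\infty(0)$, together with the asymptotics \eqref{deltaa}--\eqref{Rg} from Theorem~\ref{clas1}, rules out a limit with negative $Q$. It does not: the solutions built in Proposition~\ref{m2} (Case~1, $m$ even) satisfy $u(0)=\alpha_0=0$ and $u\leq w_+=\frac{\alpha_1}{2}r^2\leq 0$, hence have a global maximum at the origin while obeying every conclusion of Theorem~\ref{clas1}. Your dimension-$4$ remark is circular: $\sup v_\infty<+\infty$ is already contained in $v_\infty\leq 0$. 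The paper makes the point explicitly in the paragraph following Theorem~\ref{trmquant}: on an \emph{open} domain, rescaling precisely these solutions produces concentration at a point of negative (or zero) $Q$-curvature. The blow-up constraints you invoke therefore cannot by themselves distinguish the closed-manifold situation from the open one.

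What your sketch is missing is an input specific to $M$ closed that forces the polynomial part $p$ of the blow-up limit to be constant (equivalently, $\Delta^j v_\infty\to 0$ at infinity for $1\leq j\leq m-1$). This is what the Green-function representation $u_k-\bar u_k=\int_M G(\cdot,y)\bigl(Q_ke^{2mu_k}-Q^{2m}_g\bigr)\,\mathrm{dvol}_g$ on a closed manifold provides: since $G$ has only a logarithmic singularity, derivatives of $u_k-\bar u_k$ are globally controlled by integrable kernels against an $L^1$-bounded source, and after the rescaling $y\mapsto \exp_{x_k}(r_k y)$ with $r_k\to 0$ these estimates suppress any polynomial growth in the limit. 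Once $p\equiv\mathrm{const}$ is forced, Theorems~\ref{clas1} and~\ref{zero} (which give $\deg p\geq 2$ when $Q\leq 0$) yield the contradiction, and for $Q>0$ the classification in~\cite{mar} identifies $v_\infty$ as a standard bubble carrying mass exactly $(2m-1)!|S^{2m}|$. Without this step neither the sign conclusion $Q_0(x^{(i)})>0$ nor the exact quantization in \eqref{QgL} is established.
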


In sharp contrast with Theorem \ref{trmquant}, on an open domain $\Omega\subset \R{2m}$ (or a manifold with boundary), $m>1$, concentration is possible at points of negative or zero curvature. Indeed, take any solution $u$ of \eqref{eq0}-\eqref{area} with $ Q\leq 0$, whose existence is given by Theorem \ref{exist}, and consider the sequence 
$$u_k(x):=u(k(x-x_0))+\log k, \quad \textrm{for }x\in\Omega$$
for some fixed $x_0\in\Omega$. Then $(-\Delta)^m u_k=Q e^{2mu_k}$ and $u_k$ concentrates at $x_0$ in the sense that as $k\to \infty$ we have $u_k(x_0)\to +\infty$, $u_k\to-\infty$ a.e. in $\Omega$ and $e^{2mu_k}dx\rightharpoonup \alpha|S^{2m}|\delta_{x_0}$ in the sense of measures.

The $2$ dimensional case ($m=1$) is different and concentration at points of non-positive curvature can be ruled out on open domains too, because otherwise a standard blowing-up procedure would yield a solution to \eqref{eq0}-\eqref{area} with $Q\leq 0$, contradicting with Theorem \ref{exist}.

An immediate consequence of Theorem \ref{trmquant} and the Gauss-Bonnet-Chern formula, is the following compactness result (see \cite{mar2}):

\begin{cor} In the hypothesis of Theorem \ref{trmquant} assume that either
\begin{enumerate}
\item $\chi(M)\leq 0$ and $\dim M\in\{2,4\}$, or
\item $\chi(M)\leq 0$, $\dim M\geq 6$ and $(M,g)$ is locally conformally flat,
\end{enumerate}
where $\chi(M)$ is the Euler-Poincar\'e characteristic of $M$. Then only case (i) in Theorem \ref{trmquant} occurs.
\end{cor}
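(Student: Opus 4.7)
The plan is to derive a contradiction by assuming that alternative (ii) of Theorem \ref{trmquant} holds. Under (ii), the identity \eqref{intQg} gives
$$\int_M Q^{2m}_g \, \mathrm{dvol}_g \;=\; I(2m-1)!|S^{2m}| \;>\; 0,$$
since $I\ge 1$. I will show that in each of the regimes (1) and (2) the Chern--Gauss--Bonnet theorem together with $\chi(M)\le 0$ forces $\int_M Q^{2m}_g \, \mathrm{dvol}_g \le 0$, excluding (ii) and leaving only (i).

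In regime (1) with $\dim M = 2$, the $Q$-curvature coincides with the Gaussian curvature, so classical Gauss--Bonnet reads $\int_M Q^{2}_g \, \mathrm{dvol}_g = 2\pi\chi(M) \le 0$, contradicting the above positivity. In regime (1) with $\dim M = 4$, I would use the four-dimensional Chern--Gauss--Bonnet formula in the conformally invariant form
$$\int_M \Big( Q^{4}_g + \tfrac{1}{4}|W_g|^2 \Big) \, \mathrm{dvol}_g \;=\; 8\pi^2 \chi(M),$$
where $W_g$ denotes the Weyl tensor. Since $|W_g|^2\ge 0$ and $\chi(M)\le 0$, this gives $\int_M Q^{4}_g \, \mathrm{dvol}_g \le 8\pi^2\chi(M)\le 0$, again contradicting positivity.

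In regime (2), where $\dim M = 2m\ge 6$ and $(M,g)$ is locally conformally flat, the Weyl tensor vanishes identically and the Pfaffian integrand of Chern--Gauss--Bonnet reduces, modulo an exact divergence, to a constant multiple of $Q^{2m}_g$. The correct normalization is pinned down by testing on the round $(S^{2m},g_{\mathrm{round}})$, where $Q^{2m}_g\equiv(2m-1)!$ and $\chi=2$, yielding
$$\int_M Q^{2m}_g \, \mathrm{dvol}_g \;=\; \tfrac{(2m-1)!|S^{2m}|}{2}\,\chi(M) \;\le\; 0,$$
which again contradicts the positivity derived from \eqref{intQg}. Hence (ii) cannot occur under the stated assumptions and only alternative (i) remains.

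The only subtlety is to invoke the appropriate form of Chern--Gauss--Bonnet in each dimensional regime; in particular, outside the locally conformally flat setting in dimension $2m\ge 6$ the Pfaffian contains Weyl-type contributions of indefinite sign, which is precisely why hypothesis (2) restricts to $W_g\equiv 0$. Once the correct formula is in hand, the contradiction with \eqref{intQg} is immediate and the corollary follows from Theorem \ref{trmquant}.
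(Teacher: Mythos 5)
Your argument is correct and is exactly the route the paper intends: it states the corollary as an immediate consequence of the identity \eqref{intQg} (which forces $\int_M Q_g\,\mathrm{dvol}_g>0$ in case (ii)) together with the Gauss--Bonnet--Chern formula, whose $Q$-curvature forms in dimensions $2$, $4$, and in the locally conformally flat case you invoke correctly (the precise coefficient of $|W_g|^2$ in dimension $4$ depends on the norm convention, but only its nonnegativity is used). The details are deferred to \cite{mar2}, so your write-up simply supplies the computation the paper leaves implicit.
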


\medskip

The paper is organized as follows. The proof of Theorems \ref{exist}, \ref{clas1} and \ref{zero} is given in the following three sections; in the last section we collect some open questions.
In the following, the letter $C$ denotes a generic constant, which may change from line to line and even within the same line.

\section{Proof of Theorem \ref{exist}}

Theorem \ref{exist} follows from Propositions \ref{m1} and \ref{m2} below. 

\begin{prop}\label{m1} For $m=1$, $Q<0$ there are no solutions to \eqref{eq0}-\eqref{area}.
\end{prop}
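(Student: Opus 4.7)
The plan is to exploit the fact that with $m=1$ and $Q<0$ any solution $u$ to \eqref{eq0} is subharmonic on $\R{2}$, which will be incompatible with the finite-volume assumption \eqref{area}. After normalizing $Q=-1$, equation \eqref{eq0} reads $\Delta u = e^{2u}\geq 0$, so $u$ is subharmonic; since every solution is smooth (as recalled after \eqref{area}), $u$ is in particular locally integrable, so all mean values are well-defined.

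The key step is then a combination of the submean value inequality with Jensen's inequality. For every $x\in\R{2}$ and every $r>0$ I would first write
$$u(x) \;\leq\; \frac{1}{\pi r^2}\int_{B_r(x)} u(y)\,dy,$$
and then apply Jensen's inequality to the convex function $t\mapsto e^{2t}$ against the probability measure $\frac{1}{\pi r^2}\chi_{B_r(x)}\,dy$ to obtain
$$e^{2u(x)} \;\leq\; \exp\!\bra{\frac{2}{\pi r^2}\int_{B_r(x)} u\,dy} \;\leq\; \frac{1}{\pi r^2}\int_{B_r(x)} e^{2u(y)}\,dy \;\leq\; \frac{1}{\pi r^2}\int_{\R{2}} e^{2u(y)}\,dy \;=\; \frac{4\alpha}{r^2}.$$
Letting $r\to+\infty$ forces $e^{2u(x)}\leq 0$ for every $x$, a contradiction, which rules out the existence of any such solution.

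I do not foresee any real obstacle: the only items that deserve a brief check are the local integrability of $u$ (immediate from smoothness) and the legitimacy of Jensen applied as above (standard). An alternative route would be to use the representation $u=v+h$ with $v$ given by \eqref{eqv} (for $m=1$, $\gamma_1=2\pi$) and $h$ harmonic, observing that the sign of $Q$ makes $v(x)$ grow like $+2\alpha\log|x|$ at infinity, and then arguing with the harmonic $h$ to contradict $\int e^{2u}<+\infty$; however this is precisely the strategy needed for the more delicate case $m\geq 2$ in Theorem \ref{clas1}, and in dimension two the subharmonic shortcut above is noticeably cleaner.
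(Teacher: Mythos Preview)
Your argument is correct and follows essentially the same idea as the paper: use subharmonicity of $u$ (since $\Delta u=e^{2u}\geq 0$) together with Jensen's inequality to contradict \eqref{area}. The only cosmetic difference is that the paper works with spherical means, obtaining $\int_{\partial B_R}e^{2u}\,d\sigma\geq 2\pi R\,e^{2u(0)}$ and then integrating in $R$, whereas you use ball means and let $r\to+\infty$ directly; both routes are equivalent.
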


\begin{proof} Assume that such a solution $u$ exists. Then, by the maximum principle, and Jensen's inequality,
$$\Intm_{\partial B_R}ud\sigma\geq u(0),\qquad \int_{\partial B_R}e^{2u}d\sigma\geq 2\pi Re^{2u(0)}.$$
Integrating in $R$ on $[1,+\infty)$, we get
$$\int_{\R{2}}e^{2u}dx=+\infty,$$
contradiction.
\end{proof}

\begin{lemma}\label{lapl} Let $u(r)$ be a smooth radial function on $\R{n}$, $n\geq 1$. Then there are \emph{positive} constants $b_m$ depending only on $n$ such that
\begin{equation}\label{eqlapl}
\Delta^m u(0)=b_mu^{(2m)}(0),
\end{equation}
$u^{(2m)}:=\frac{\partial^{2m}u}{\partial r^{2m}}$. In particular $\Delta^mu(0)$ has the sign of $u^{(2m)}(0)$.
\end{lemma}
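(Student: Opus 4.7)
The plan is to exploit the fact that a smooth radial function $u$ on $\R{n}$ is actually a smooth function of $r^2$, and hence all its odd derivatives vanish at the origin. So I write the Taylor series
$$u(r)=\sum_{k=0}^\infty a_k r^{2k},\qquad a_k=\frac{u^{(2k)}(0)}{(2k)!},$$
and work with coefficients.

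First I would compute $\Delta$ in radial coordinates. Using $\Delta u = u''+\frac{n-1}{r}u'$ and plugging in the even power series, one easily finds that $\Delta u$ is again an even power series, and
$$\Delta u(r)=\sum_{k=0}^\infty a_k^{(1)} r^{2k},\qquad a_k^{(1)}=2(k+1)(n+2k)\,a_{k+1}.$$
In particular $\Delta u$ is again smooth and radial, so the same formula may be iterated. This is the key step, since it lets me avoid any $1/r$ singularity: at the origin the singular term is resolved because $u'(0)=0$.

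By induction on $m$, iterating the recursion $a_k^{(m)} = 2(k+1)(n+2k)\,a_{k+1}^{(m-1)}$ and telescoping the product gives
$$\Delta^m u(0)=a_0^{(m)}=\Bigl(\prod_{j=0}^{m-1}2(j+1)(n+2j)\Bigr)a_m = 2^m\, m!\prod_{j=0}^{m-1}(n+2j)\cdot a_m.$$
Since $u^{(2m)}(0)=(2m)!\,a_m$, I obtain \eqref{eqlapl} with
$$b_m=\frac{2^m\,m!\,\prod_{j=0}^{m-1}(n+2j)}{(2m)!},$$
which is manifestly positive (and depends only on $n$), yielding the sign assertion.

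There is no real obstacle here; the only subtlety is justifying that $\Delta u$ remains smooth and even in $r$ at the origin, which is taken care of by the recurrence for the Taylor coefficients above. The rest is bookkeeping of the product $\prod_{j=0}^{m-1}2(j+1)(n+2j)$, which one could equally verify directly by induction on $m$ without computing $b_m$ explicitly.
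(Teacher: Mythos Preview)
Your argument is correct: the Taylor-coefficient recursion $a_k^{(1)}=2(k+1)(n+2k)a_{k+1}$ is right, and iterating it gives the stated formula for $b_m$, which is evidently positive. The paper does not actually prove this lemma---it simply refers to \cite{mar}---so there is no in-paper proof to compare against; your direct computation is a standard and fully adequate route.
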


For a proof see \cite{mar}.

\begin{prop}\label{m2} For $m\geq 2$, $Q<0$ there exist radial solutions to \eqref{eq0}-\eqref{area}.
\end{prop}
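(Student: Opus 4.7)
Proof plan.

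The strategy is to construct radial solutions via an initial-value ODE analysis. Throughout assume $Q=-(2m-1)!$. For radial $u=u(r)$ on $\R{2m}$, set $u_k:=\Delta^k u$ for $k=0,\ldots,m-1$; the equation reduces to the system
\[
\Delta u_k = u_{k+1}\ (0\le k\le m-2),\qquad \Delta u_{m-1}=(-1)^{m+1}(2m-1)!\,e^{2m u_0},
\]
or equivalently to the integral relations $r^{2m-1}u_k'(r)=\int_0^r s^{2m-1}u_{k+1}(s)\,ds$. Lemma \ref{lapl} identifies $u_k(0)$ with a positive multiple of $u^{(2k)}(0)$, and all odd radial derivatives vanish at the origin by smoothness, so the values $u_k(0)$ for $k=0,1,\ldots,m-1$ serve as $m$ free Cauchy data, and standard ODE theory yields a unique local $C^\infty$ radial solution. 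It then remains to choose the data so that the solution is global on $\R{2m}$ and has finite volume.

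As a prototype I would first settle the case $m=2$ directly. Choose $u(0)=\alpha\in\R{}$ arbitrary and $\Delta u(0)=\beta<0$. Since $\Delta^2 u=-6\,e^{4u}<0$, the radial function $\Delta u$ is strictly superharmonic and hence strictly decreasing in $r$, so $\Delta u(r)\le \beta<0$ on the maximal interval of existence. Writing $(r^3 u'(r))'=r^3\Delta u(r)\le \beta r^3$ and integrating twice yields $u(r)\le \alpha+\beta r^2/8$. Thus $u$ is bounded above, $e^{4u}$ is bounded, the solution does not blow up in finite time and is therefore global, and
\[
\int_{\R{4}}e^{4u}\,dx \le |S^3|\int_0^\infty r^3 e^{4\alpha+\beta r^2/2}\,dr<\infty.
\]
Varying $(\alpha,\beta)$ produces many solutions, matching the ``several'' in the statement.

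For general $m\ge 2$ I would use a perturbative approach around the polynomial ansatz $p(x):=-A|x|^2$, with $A>0$ large. Since $\deg p=2\le 2m-2$ whenever $m\ge 2$, we have $(-\Delta)^m p=0$; writing $u=p+\phi$ reduces the problem to finding a small radial $\phi$ satisfying the integral equation
\[
\phi(x) = -\frac{(2m-1)!}{\gamma_m}\int_{\R{2m}}\log\frac{|y|}{|x-y|}\,e^{-2mA|y|^2+2m\phi(y)}\,dy.
\]
For $A$ large the Gaussian weight $e^{-2mA|y|^2}$ overwhelms the $2m\phi$ term, and the right-hand side defines a contraction on a suitable Banach space of radial functions of at most logarithmic growth at infinity; Banach's fixed-point theorem then produces $\phi$. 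The resulting $u=-A|x|^2+\phi$ is a global radial solution with Gaussian-type decay and hence finite volume, and varying $A$ gives a one-parameter family of such solutions.

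The main obstacle is making the contraction step rigorous for general $m$: one must select the weighted function space so that the logarithmic kernel $\gamma_m^{-1}\log(|y|/|x-y|)$ is tamed by the super-polynomial decay of $e^{-2mA|y|^2+2m\phi(y)}$ uniformly over the ball of admissible $\phi$, both near $y=0$ and $y=x$ (where the kernel is singular) and at infinity (where the kernel grows logarithmically). The case $m=2$ avoids this technicality entirely thanks to the clean monotonicity of $\Delta u$, but for $m\ge 3$ the sign of $(-1)^{m+1}(2m-1)!$ alternates and the elementary superharmonicity argument does not propagate across all the iterated Laplacians $u_k$, which is why the perturbative route is the most uniform way to conclude.
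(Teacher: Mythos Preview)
Your $m=2$ argument is correct; it is essentially the paper's upper-barrier step in that dimension, carried out by hand via two integrations of $(r^3u')'=r^3\Delta u$. In fact the same cascade (choose all $\Delta^k u(0)<0$ for $1\le k\le m-1$, use superharmonicity of each $\Delta^{k}u$ inductively) works for every \emph{even} $m$, so your dismissal of the monotonicity argument for $m\ge 3$ is slightly too pessimistic there.

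The genuine gap is the odd-$m$ case, and more broadly your treatment of $m\ge 3$. You propose a Banach fixed point around $p(x)=-A|x|^2$, but you do not specify the space, and you yourself label the contraction estimate ``the main obstacle''. The logarithmic kernel forces $\phi(x)\sim c\log|x|$ at infinity, so any workable norm must carry a logarithmic weight; checking that the map preserves a ball in that space and contracts on it uniformly for large $A$ is real work that is not done here. As it stands, for $m\ge 3$ you have a plan, not a proof.

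The paper avoids fixed points entirely and stays at the ODE level via explicit barriers, splitting on the parity of $m$. For $m$ even it sandwiches $u$ between $w_+(r)=\tfrac{\alpha_1}{2}r^2$ (your bound) and a lower barrier $w_-(r)=\sum_{i=0}^{m-1}\beta_ir^{2i}-A\log\tfrac{2}{1+r^2}$; the constant $A$ is chosen large so that, using $u\le w_+$, one has $\Delta^m(u-w_-)\ge 0$, and the $\beta_i$ are chosen to make $\Delta^j(u-w_-)(0)\ge 0$, after which the same divergence-theorem induction you use once propagates the inequality. For $m\ge 3$ odd the equation reads $\Delta^m u=(2m-1)!e^{2mu}$, and the upper barrier is instead $w_+(r)=\beta-r^2-\log\tfrac{2}{1+r^2}$, with $\beta$ chosen so that $e^{2mw_+}\le\big(\tfrac{2}{1+r^2}\big)^{2m}$; then $\Delta^m(w_+-u)\ge 0$ holds as long as $u\le w_+$, and the induction again closes. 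In both cases the quadratic term in $w_+$ gives Gaussian decay of $e^{2mu}$ and hence finite volume. This barrier construction is elementary and complete, and is the missing ingredient your sketch needs for odd $m$.
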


\begin{proof} We consider separately the cases when $m$ is even and when $m$ is odd. 
\noindent\emph{Case 1: $m$ even.} Let $u=u(r)$ be the unique solution of the following ODE:

$$
\left\{
\begin{array}{ll}
\Delta^m u(r)=-(2m-1)! e^{2mu(r)} &\\
u^{(2j+1)}(0)=0& 0\leq j\leq m-1\\
u^{(2j)}(0)=\alpha_j\leq 0 & 0\leq j\leq m-1,
\end{array}
\right.
$$
where $\alpha_0=0$ and $\alpha_1<0$.
We claim that the solution exists for all $r\geq 0$. To see that, we shall use barriers, compare \cite[Theorem 2]{CC}. Let us define
$$w_+(r)=\frac{\alpha_1}{2} r^2,\quad g_+ :=w_+-u.$$
Then $\Delta^m g_+\geq 0$. By the divergence theorem,
$$\int_{B_R}\Delta^j g_+dx=\int_{\partial B_R}\frac{d \Delta^{j-1} g_+}{dr}d\sigma.$$
Moreover, from Lemma \ref{lapl}, we infer
$$\Delta^j g_+(0)\geq 0 \quad\textrm{ for }0\leq j\leq m-1,$$
hence we see inductively that $\Delta^j g_+(r)\geq 0$ for every $r$ such that $g_+(r)$ is defined and for $0\leq j\leq m-1$. In particular $g_+\geq 0$ as long as it exists.

Let us now define
$$w_-(r):=\sum_{i=0}^{m-1}\beta_i r^{2i}-A\log\frac{2}{1+r^2}, \quad g_-:=u-w_-,$$
where the $\beta_i$'s and $A$ will be chosen later. Notice that
$$\Delta^m w_-(r)=\Delta^m\bigg(-A\log\frac{2}{1+r^2}\bigg)=-(2m-1)! A\bigg(\frac{2}{1+r^2}\bigg)^{2m}.$$
Since $\alpha_1<0$,
$$\lim_{r\to+\infty}\frac{\big(\frac{2}{1+r^2}\big)^{2m}}{e^{m\alpha_1 r^2}}=+\infty,$$
and taking into account that $u\leq w_+$, we can choose $A$ large enough, so that
\begin{eqnarray*}
\Delta^m g_-(r)&=&(2m-1)!\bigg[A \bigg(\frac{2}{1+r^2}\bigg)^{2m}-e^{2mu(r)}\bigg]\\
&\geq& (2m-1)!\bigg[A \bigg(\frac{2}{1+r^2}\bigg)^{2m}-e^{m\alpha_1r^2}\bigg]\geq 0.
\end{eqnarray*}
We now choose each $\beta_i$ so that
$$ \Delta^j g_-(0)\geq 0,\quad 0\leq j\leq m-1,$$
and proceed by induction as above to prove that $g_-\geq 0$. Hence
$$w_-(r)\leq u(r)\leq w_+(r)$$
as long as $u$ exists, and by standard ODE theory, that implies that $u(r)$ exists for all $r\geq 0$. Finally
$$\int_{\R{2m}}e^{2mu(|x|)}dx\leq \int_{\R{2m}}e^{m\alpha_1|x|^2}dx<+\infty.$$

\medskip

\noindent\emph{Case 2: $m\geq 3$ odd.} Let $u=u(r)$ solve

$$
\left\{
\begin{array}{ll}
\Delta^m u(r)=(2m-1)! e^{2mu(r)} &\\
u^{(2j+1)}(0)=0& 0\leq j\leq m-1\\
u^{(2j)}(0)=\alpha_j\leq 0 & 0\leq j\leq m-1,
\end{array}
\right.
$$
where the $\alpha_i$'s have to be chosen. Set
$$w_+(r):=\beta-r^2-\log\frac{2}{1+r^2},\quad g_+:=w_+-u,$$
where $\beta<0$ is such that $e^{-r^2+\beta}\leq \big(\frac{2}{1+r^2}\big)^2$, hence
$$\frac{2}{1+r^2}-\frac{1+r^2}{2}e^{-r^2+\beta}\geq 0\quad \textrm{for all }r>0.$$
Then, as long as $g_+\geq 0$, we have
\begin{eqnarray*}
\Delta^m g_+(r)&=&(2m-1)!\bigg[\bigg(\frac{2}{1+r^2}\bigg)^{2m}-e^{2mu(r)}\bigg]\\
&\geq& (2m-1)!\bigg[\bigg(\frac{2}{1+r^2}\bigg)^{2m}-e^{2mw_+(r)}\bigg] \geq 0
\end{eqnarray*}
Choose now the $\alpha_i$'s so that, $u^{(2i)}(0)< w_+^{(2i)}(0)$, for $0\leq i\leq m-1$. From Lemma \ref{lapl}, we infer that
$$\Delta^{i}g_+(0)\geq 0,\quad 0\leq i\leq m-1,$$
and we see by induction that $g_+\geq 0$ as long as it is defined.
As lower barrier, define
$$w_-(r)=\sum_{i=0}^{m-1}\beta_ir^{2i},\quad g_-:=u-w_-,$$
where the $\beta_i$'s are chosen so that $\Delta^ig_-(0)\geq 0$. Then, observing that
$$\Delta^m g_-(r)=(2m-1)!e^{2mu(r)}>0,$$
as long as $u$ is defined, we conclude as before that $g_-\geq 0$ as long as it is defined. Then $u$ is defined for all times.

Let $R>0$ be such that, for every $r\geq R$, $w_+(r)\leq -\frac{r^2}{2}$. Then
$$\int_{\R{2m}}e^{2m u(|x|)}dx\leq \int_{B_R}e^{2mu(|x|)}dx+\int_{\R{2m}\backslash B_R}e^{-m|x|^2}dx<+\infty.$$
\end{proof}

\section{Proof of Theorem \ref{clas1}}\label{main}

The proof of Theorem \ref{clas1} is divided in several lemmas. The following Liouville-type theorem will prove very useful.

\begin{trm}\label{trmliou2} Consider $h:\R{n}\to \R{}$ with $\Delta^m h=0$ and $h\leq u-v$, where $e^{pu}\in L^1(\R{n})$ for some $p>0$,  $(-v)^+\in L^1(\R{n})$. Then $h$ is a polynomial of degree at most $2m-2$. 
\end{trm}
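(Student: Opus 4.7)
The plan is to extract, from the hypotheses on $u$ and $v$, the integrability $h^+\in L^1(\R{n})$, and then to exploit the polyharmonicity of ball averages together with a classical polyharmonic Liouville theorem to conclude.

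First, I would show $h^+\in L^1(\R{n})$. From $\chi_{\{u\geq 0\}}\leq e^{pu}$ one has $|\{u\geq 0\}|\leq \|e^{pu}\|_{L^1}<+\infty$, and the elementary inequality $u\leq p^{-1}e^{pu}$ (valid whenever $u\geq 0$) gives $u^+\in L^1(\R{n})$ with $\|u^+\|_{L^1}\leq p^{-1}\|e^{pu}\|_{L^1}$. Combined with the pointwise estimate $h^+\leq (u-v)^+\leq u^++(-v)^+$ and the hypothesis $(-v)^+\in L^1(\R{n})$, this yields $h^+\in L^1(\R{n})$.

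Next, for each $R>0$ consider the ball average $F_R(x_0):=\Intm_{B_R(x_0)}h(y)\,dy$, regarded as a function of $x_0\in\R{n}$. Differentiating under the integral sign and using $\Delta^m h=0$, one checks that $\Delta^m_{x_0}F_R\equiv 0$, so $F_R$ is polyharmonic of order $m$ in the variable $x_0$. Moreover, since $h\leq h^+$ pointwise, $F_R(x_0)\leq \|h^+\|_{L^1}/|B_R|$ uniformly in $x_0$; thus $F_R$ is a polyharmonic function on $\R{n}$ which is bounded above by a constant.

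I would then invoke the classical polyharmonic Liouville theorem: a polyharmonic function of order $m$ on $\R{n}$ bounded above is necessarily a polynomial of degree at most $2m-2$ (equivalently, via $M-h$, a non-negative polyharmonic function on $\R{n}$ is such a polynomial; see e.g.\ \cite{mar}). Applied to $F_R$, this shows that for each fixed $R>0$, the function $x_0\mapsto F_R(x_0)$ is a polynomial of degree at most $2m-2$. Since $h$ is smooth by elliptic regularity for $\Delta^m$, we have $F_R\to h$ uniformly on every compact subset of $\R{n}$ as $R\to 0^+$, and a uniform limit on a set of nonempty interior of polynomials of degree at most $2m-2$ is again a polynomial of the same degree bound; hence $h$ itself is a polynomial of degree at most $2m-2$.

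The main obstacle is the classical polyharmonic Liouville theorem invoked in the third step, which is itself nontrivial. One standard proof proceeds by induction on $m$ via the Almansi decomposition $h(x)=\sum_{k=0}^{m-1}|x|^{2k}h_k(x)$ with harmonic $h_k$, reducing to the one-sided harmonic Liouville. An alternative, more hands-on route would apply Pizzetti's formula to show directly that $\Delta^{m-1}h$ is a non-positive constant and then iterate; this becomes delicate precisely when $\Delta^{m-1}h\equiv c<0$, since naively subtracting a multiple of $|x|^{2m-2}$ from $h$ destroys the $L^1$-property of the positive part, forcing the induction to be set up more carefully (e.g.\ using polynomial upper bounds in place of $L^1$-integrability).
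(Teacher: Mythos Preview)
Your argument is correct but takes a genuinely different route from the paper. The paper proceeds directly: using the interior estimate
\[
|D^{2m-1}h(x)|\leq \frac{C}{R^{2m-1}}\Intm_{B_R(x)}|h|\,dy
= -\frac{C}{R^{2m-1}}\Intm_{B_R(x)}h\,dy + \frac{2C}{R^{2m-1}}\Intm_{B_R(x)}h^+\,dy,
\]
the Pizzetti-type bound $\Intm_{B_R(x)}h\,dy=O(R^{2m-2})$ for polyharmonic $h$, and the same observation $h^+\leq u^+ + (-v)^+ \in L^1$ that you make, to conclude $D^{2m-1}h\equiv 0$ after letting $R\to\infty$. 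So the paper never invokes a one-sided polyharmonic Liouville theorem as a black box; it essentially \emph{proves} it in this slightly more general setting.

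Your detour through the averaged functions $F_R$ is clean and the steps (polyharmonicity of $F_R$, the uniform upper bound $\|h^+\|_{L^1}/|B_R|$, and passage to the limit $R\to 0^+$ in the finite-dimensional space of polynomials of degree $\leq 2m-2$) are all valid. What it buys you is a reduction of the mixed $L^1$-type hypothesis to the purely ``bounded above'' case, which is conceptually pleasant if that Liouville theorem is already in your toolkit. The cost is that this black box is of the same depth as the theorem you are proving: as you yourself note in the last paragraph, its proof (via Almansi, or via Pizzetti plus gradient estimates) requires exactly the ingredients the paper uses directly on $h$. In other words, your averaging trick is correct but not strictly necessary --- the paper's argument applies to $h$ itself without first passing to $F_R$.
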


\begin{proof} As in \cite[Theorem 5]{mar}, for any $x\in\R{2m}$ we have
\begin{eqnarray}
|D^{2m-1}h(x)|&\leq&\frac{C}{R^{2m-1}}\Intm_{B_R(x)}|h(y)|dy \nonumber \\
&=&-\frac{C}{R^{2m-1}}\Intm_{B_R(x)}h(y)dy+\frac{2C}{R^{2m-1}}\Intm_{B_R(x)} h^+dy \label{eqla}
\end{eqnarray}
and
$$\Intm_{B_R(x)}h(y)dy=O(R^{2m-2}),\quad \textrm{as }R\to\infty.$$
Then
$$\Intm_{B_R(x)}h^+dy\leq \Intm_{B_R(x)} u^+dy+C\Intm_{B_R(x)}(-v)^+dy
\leq\frac{1}{p}\Intm_{B_R(x)}e^{pu}dy+\frac{C}{R^{2m}},$$
and both terms in \eqref{eqla} divided by $R^{2m-1}$ go to $0$ as $R\to\infty$.
\end{proof}

\begin{lemma}\label{lemmabeta} Let $u$ be a solution of \eqref{eq0}-\eqref{area}.
Then, for $|x|\geq 4$
\begin{equation}\label{eqlog}
v(x)\leq 2\alpha \log|x|+C.
\end{equation}
\end{lemma}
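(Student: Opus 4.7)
The plan is to bound $v(x)$ directly from the integral representation \eqref{eqv}, using only the triangle inequality on the kernel $\log(|x-y|/|y|)$ together with a simple splitting of the domain into $\{|y|\leq|x|\}$ and $\{|y|>|x|\}$. The heuristic is transparent: for $|y|\ll|x|$ we have $|x-y|\approx|x|$, so the kernel behaves like $\log|x|-\log|y|$ and produces the leading $\log|x|$ contribution; for $|y|\gg|x|$ the kernel is $O(1)$; and the logarithmic singularity near $|y|\approx|x|$ is integrable and harmless.

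Concretely, from $|x-y|\leq|x|+|y|$ one gets
\[
\log\frac{|x-y|}{|y|}\leq \log\frac{|x|+|y|}{|y|},
\]
which is $\leq\log 2$ on $\{|y|>|x|\}$ and $\leq \log(2|x|)-\log|y|$ on $\{|y|\leq|x|\}$. Splitting the integral in \eqref{eqv} accordingly and using \eqref{area},
\[
\int_{\R{2m}}\log\frac{|x-y|}{|y|}\,e^{2mu(y)}\,dy\leq \log(2|x|)\,\alpha|S^{2m}|-\int_{|y|\leq|x|}\log|y|\,e^{2mu(y)}\,dy+(\log 2)\,\alpha|S^{2m}|.
\]

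The middle term is the only one that needs a second thought. Split it at $|y|=1$: on $1\leq|y|\leq|x|$ the integrand is nonnegative (after the minus sign, nonpositive as it enters the sum) and discarded, while on $|y|\leq 1$ the smoothness of $u$ (recalled in the introduction) gives $e^{2mu}\leq M$ on $B_1$, so this piece is bounded by $M\int_{B_1}|\log|y||\,dy<+\infty$. Multiplying the whole estimate by $\frac{(2m-1)!}{\gamma_m}$ and using the standard identity $(2m-1)!\,|S^{2m}|=2\gamma_m$ (easily verified from $\gamma_m=\omega_{2m}2^{2m-2}[(m-1)!]^2$ and the formula for $|S^{2m}|$), we obtain $v(x)\leq 2\alpha\log|x|+C$ for all $|x|\geq 4$; the threshold $4$ is merely a convenient choice ensuring $\log|x|>0$. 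There is no real obstacle here, the only mildly delicate point being the control of $-\log|y|$ near the origin, which is immediate from local boundedness of $e^{2mu}$.
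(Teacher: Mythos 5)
Your proof is correct: the triangle-inequality bound on the kernel, the split of the integral at $|y|=|x|$, and the control of the $\log|y|$ singularity at the origin via local boundedness of $e^{2mu}$ (smoothness of $u$) together give exactly \eqref{eqlog}, and the normalization $\frac{(2m-1)!|S^{2m}|}{\gamma_m}=2$ is the same identity the paper invokes in the proof of Lemma \ref{intv+}. The paper itself proves this lemma only by citing \cite[Lemma 9]{mar} (with the sign of $v$ reversed), and the argument there is this same elementary potential estimate, so your write-up is essentially the paper's proof with the details made explicit.
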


\begin{proof} As in \cite[Lemma 9]{mar}, changing $v$ with $-v$.
\end{proof}

\begin{lemma}\label{intv+} For any $\ve>0$, there is $R>0$ such that for $|x|\geq R$,
\begin{equation}\label{vR}
v(x)\geq \Big(2\alpha-\frac{\ve}{2}\Big)\log|x|+\frac{(2m-1)!}{\gamma_m}\int_{B_1(x)}\log|x-y|e^{2mu(y)}dy.
\end{equation}
Moreover
\begin{equation}\label{v++}
(-v)^+\in L^1(\R{2m}).
\end{equation}
\end{lemma}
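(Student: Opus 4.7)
The plan is to establish the pointwise lower bound \eqref{vR} by partitioning the integral defining $v(x)$ into annular regions plus the neighborhood $B_1(x)$, and then to deduce \eqref{v++} from \eqref{vR} via a Fubini argument. Set $C_0:=(2m-1)!/\gamma_m$; a direct computation from the formulas for $\gamma_m$ and $|S^{2m}|$ gives $C_0|S^{2m}|=2$, so that $C_0\log|x|\int_{\R{2m}}e^{2mu(y)}dy=2\alpha\log|x|$, which is precisely where the factor $2\alpha$ in \eqref{vR} comes from.

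For \eqref{vR}, given $\ve>0$ first choose $R_1$ large enough that $\int_{|y|>R_1}e^{2mu}dy<\eta$ for a small $\eta=\eta(\ve)>0$ to be fixed; then, for $|x|$ much larger than $R_1$, split $\R{2m}=I\cup II\cup III$ with $I=\{|y|\leq|x|/2\}$, $II=\{|x|/2<|y|<2|x|\}$, $III=\{|y|\geq 2|x|\}$. On $I$ one has $\log|x-y|=\log|x|+O(1)$ uniformly, and together with the elementary estimate
\begin{equation*}
\int_I\log|y|\,e^{2mu}dy\leq \eta\log(|x|/2)+C(R_1)
\end{equation*}
this yields $C_0\int_I\log(|x-y|/|y|)e^{2mu}dy\geq(2\alpha-\ve/4)\log|x|-C$ once $\eta$ is chosen so that $2C_0\eta\leq \ve/4$. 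On $III$, $\log(|x-y|/|y|)\geq-\log 2$ while $\int_{III}e^{2mu}\to 0$, so this region contributes $o(1)$. On $II$, isolate the subregion $B_1(x)$: its $\log|x-y|$ part is exactly the local term appearing in \eqref{vR}, while $-C_0\int_{B_1(x)}\log|y|\,e^{2mu}dy=o(\log|x|)$ because $|y|=|x|+O(1)$ on $B_1(x)$ and $\int_{B_1(x)}e^{2mu}\to 0$; on $II\setminus B_1(x)$ one has $\log|x-y|\geq 0$ and $\log|y|\leq\log(2|x|)$, which together with $\int_{II}e^{2mu}\to 0$ shows this contribution is $o(\log|x|)$ as well. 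Summing the four pieces and absorbing the bounded and $o(\log|x|)$ remainders into a further $-(\ve/4)\log|x|$ loss for $|x|\geq R$ yields \eqref{vR}.

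For \eqref{v++}, choose $\ve<4\alpha$; then $(2\alpha-\ve/2)\log|x|\geq 0$ for $|x|\geq R\geq 1$, and \eqref{vR} gives
\begin{equation*}
0\leq (-v)^+(x)\leq -C_0\int_{B_1(x)}\log|x-y|\,e^{2mu(y)}dy\qquad\text{for }|x|\geq R,
\end{equation*}
the right-hand side being non-negative because $\log|x-y|\leq 0$ on $B_1(x)$. By Tonelli,
\begin{equation*}
\int_{|x|\geq R}(-v)^+dx\leq -C_0\int_{\R{2m}}e^{2mu(y)}\bigg(\int_{B_1(y)}\log|z|\,dz\bigg)dy=-C_0\alpha|S^{2m}|\int_{B_1(0)}\log|z|\,dz<+\infty,
\end{equation*}
the last integral being a negative constant. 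On $\{|x|<R\}$, $v$ is continuous (the logarithmic convolution of the smooth $L^1$ density $e^{2mu}$), hence locally bounded, so $(-v)^+\in L^1_{\loc}(\R{2m})$; combining these gives \eqref{v++}.

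The main obstacle lies in Region $I$: since we have no pointwise decay of $e^{2mu}$ at infinity, only its integrability, the logarithmic growth of $\log|y|$ must be traded against the smallness of the tail $\int_{|y|>R_1}e^{2mu}$. This forces the two-parameter structure ($R_1$ chosen first, then $|x|$ large depending on $R_1$) and is what produces the $\ve/2$ deficit in the leading coefficient.
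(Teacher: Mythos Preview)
Your proof is correct and follows essentially the same strategy as the paper: partition the convolution defining $v$, isolate the local piece $B_1(x)$, trade the tail smallness of $e^{2mu}$ against the growth of $\log|y|$, and then deduce \eqref{v++} by Fubini/Tonelli. The only cosmetic difference is the choice of partition: the paper splits $\R{2m}=B_{R_0}\cup A_1\cup A_2$ with $A_1=\{2|x-y|\leq|x|,\ |y|\geq R_0\}$ and $A_2=\{2|x-y|>|x|,\ |y|\geq R_0\}$ (so the near/far dichotomy is in terms of $|x-y|$), whereas you use the dyadic shells $\{|y|\leq|x|/2\}$, $\{|x|/2<|y|<2|x|\}$, $\{|y|\geq 2|x|\}$; the estimates on the respective pieces are analogous.
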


\begin{proof}
To prove \eqref{vR} we follow \cite{lin}, Lemma 2.4.
Choose $R_0>0$ such that
$$\frac{1}{|S^{2m}|}\int_{B_{R_0}}e^{2mu}dx\geq \alpha-\frac{\ve}{16},$$
and decompose
\begin{eqnarray*}
\R{2m}&=&B_{R_0}\cup A_1\cup A_2,\\
A_1&:=&\{y\in \R{2m}: 2|x-y|\leq |x|,|y|\geq R_0\},\\
A_2&:=&\{y\in \R{2m}: 2|x-y|> |x|, |y|\geq R_0\}.
\end{eqnarray*}
Next choose $R\geq 2$ such that for $|x|>R$ and $|y|\leq R_0$, we have
$\log\frac{|x-y|}{|y|}\geq \log|x|-\ve$. Then, observing that $\frac{(2m-1)!|S^{2m}|}{\gamma_m}=2$, we have for $|x|>R$
\begin{eqnarray}
\frac{(2m-1)!}{\gamma_m}\int_{B_{R_0}}\log\frac{|x-y|}{|y|}e^{2mu(y)}dy&\geq &\Big(\log|x|-\frac{\ve}{16}\Big)\frac{(2m-1)!}{\gamma_m}\int_{B_{R_0}}e^{2mu}dy\nonumber\\
&\geq& \Big(2\alpha-\frac{\ve}{8}\Big)\log|x|-C\ve.\label{BR0}
\end{eqnarray}
Observing that $\log|x-y|\geq 0$ for $y\notin B_1(x)$, $\log|y|\leq \log(2|x|)$ for $y\in A_1$,  $\int_{A_1}e^{2mu}dy\leq \frac{\ve|S^{2m}|}{16}$ and $\log(2|x|)\leq 2\log|x|$ for $|x|\geq R$,  we infer
\begin{eqnarray}
\int_{A_1}\log\frac{|x-y|}{|y|}e^{2mu(y)}dy&=&\int_{A_1}\log|x-y|e^{2mu(y)}dy-\int_{A_1}\log|y|e^{2mu(y)}dy\nonumber\\
&\geq&\int_{B_1(x)}\log|x-y|e^{2mu(y)}dy-\log(2|x|)\int_{A_1}e^{2mu}dy\nonumber\\
&\geq&\int_{B_1(x)}\log|x-y|e^{2mu(y)}dy-\log|x|\frac{\ve|S^{2m}|}{8}.\label{AA1}
\end{eqnarray}
Finally, for $y\in A_2$, $|x|>R$ we have that $\frac{|x-y|}{|y|}\geq \frac{1}{4}$, hence
\begin{equation}\label{AA2}
\int_{A_2}\log\frac{|x-y|}{|y|}e^{2mu(y)}dy\geq -\log(4)\int_{A_2}e^{2mu}dy\geq -C\ve.
\end{equation}
Putting together \eqref{BR0}, \eqref{AA1} and \eqref{AA2}, and possibly taking $R$ even larger, we obtain \eqref{vR}. From \eqref{vR} and Fubini's theorem
\begin{eqnarray*}
\int_{\R{2m}\bs B_R}(-v)^+dx&\leq& C\int_{\R{2m}}\int_{\R{2m}}\chi_{|x-y|< 1}\log\frac{1}{|x-y|} e^{2mu(y)}dydx\nonumber\\
&=& C \int_{\R{2m}}e^{2mu(y)}\int_{B_1(y)}\log\frac{1}{|x-y|}dxdy \\
&\leq &C\int_{R^{2m}}e^{2mu(y)}dy<\infty.
\end{eqnarray*}
Since $v\in C^\infty(\R{2m})$, we conclude that $\int_{B_R}(-v)^+dx<\infty$ and \eqref{v++} follows.
\end{proof}

\begin{lemma}\label{Deltapol} Let $u$ be a solution of \eqref{eq0}-\eqref{area}, with $m\geq 2$.
Then $u=v+p$, where $p$ is a polynomial of degree at most $2m-2$. \end{lemma}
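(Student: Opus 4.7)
The strategy is a direct application of the Liouville-type Theorem \ref{trmliou2} to the difference $h := u - v$. First I would check that $\Delta^m h = 0$ on $\R{2m}$: since $\frac{1}{\gamma_m}\log\frac{1}{|x|}$ is a fundamental solution of $(-\Delta)^m$, and the summand $\int_{\R{2m}}\log|y|\,e^{2mu(y)}dy$ in the definition of $v$ is an additive constant (independent of $x$), convolution yields $(-\Delta)^m v = -(2m-1)!e^{2mu}$; combined with $(-\Delta)^m u = -(2m-1)!e^{2mu}$ from \eqref{eq0}, this gives $\Delta^m h = 0$, so $h$ is polyharmonic of order $m$.

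Next I would invoke Theorem \ref{trmliou2} with the same pair $(u,v)$ used to define $h$, taking the Liouville exponent to be $p=2m$. The two required integrability conditions have already been established in the preceding lemmas: $e^{2mu}\in L^1(\R{2m})$ is exactly the finite-volume assumption \eqref{area}, and $(-v)^+\in L^1(\R{2m})$ is precisely \eqref{v++} in Lemma \ref{intv+}. The pointwise inequality $h \leq u-v$ demanded by the Liouville theorem holds trivially with equality. The theorem then forces $h$ to be a polynomial of degree at most $2m-2$, and setting $p := h$ delivers the asserted decomposition $u = v + p$.

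I do not foresee any genuine obstacle, since each ingredient — the polyharmonic Liouville theorem, the logarithmic upper bound of Lemma \ref{lemmabeta}, and the integrability \eqref{v++} of $(-v)^+$ in Lemma \ref{intv+} — has been engineered to fit precisely into this argument. The only point meriting a brief verification is the identification of $v$ as a convolutional solution of $(-\Delta)^m v = -(2m-1)!\,e^{2mu}$, which amounts to discarding the $x$-independent summand involving $\log|y|$ and applying the fundamental-solution identity recalled right after \eqref{eqv}. Once this is noted, the lemma follows in one line from Theorem \ref{trmliou2}.
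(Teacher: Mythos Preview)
Your proposal is correct and follows exactly the paper's own argument: define $p:=u-v$, note $\Delta^m p=0$, and apply Theorem~\ref{trmliou2} using $e^{2mu}\in L^1(\R{2m})$ from \eqref{area} and $(-v)^+\in L^1(\R{2m})$ from \eqref{v++}. The only superfluous remark is the mention of Lemma~\ref{lemmabeta}, which is not needed here; otherwise your write-up just expands the paper's one-line proof.
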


\begin{proof} Let $p:=u-v$. Then $\Delta^m p=0$. Apply \eqref{v++} and Theorem \ref{trmliou2}.
\end{proof}

\begin{lemma}\label{polinf} Let $p$ be the polynomial of Lemma \ref{Deltapol}. Then if $m=2$, there exists $\delta>0$ such that
\begin{equation}\label{pdelta}
p(x)\leq -\delta|x|^2+C.
\end{equation}
In particular $\lim_{|x|\to\infty}p(x)=-\infty$ and $\deg p=2$. For $m\geq 3$ there is a (possibly empty) closed set $Z\subset S^{2m-1}$ of Hausdorff dimension $\dim^\M{H}(Z)\leq 2m-2$ such that for every $K\subset S^{2m-1}\backslash Z$ closed, there exists $\delta=\delta(K)>0$ such that 
\begin{equation}\label{pdelta2}
p(x)\leq -\delta|x|^2+C \quad \textrm{for } \frac{x}{|x|}\in K.
\end{equation}
Consequently $\deg p$ is even.
\end{lemma}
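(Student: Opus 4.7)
The plan is to write $p=\sum_{j=0}^d P_j$ with each $P_j$ homogeneous of degree $j$ and $P_d\not\equiv 0$, and then pin down the leading behavior of $p$ by combining the integrability $\int_{\R{2m}}e^{2mu}dx<+\infty$ with the pointwise lower bound \eqref{vR}. The key auxiliary set is
\[
A:=\{x\in\R{2m}:h(x)<-1\},\quad h(x):=\frac{(2m-1)!}{\gamma_m}\int_{B_1(x)}\log|x-y|e^{2mu(y)}dy,
\]
which has finite measure by the same Fubini estimate used for \eqref{v++}; on $A^c$, the bound \eqref{vR} becomes the pointwise estimate $v(x)\geq(2\alpha-\ve/2)\log|x|-C$ for $|x|\geq R$.

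The first step is to show $P_d\leq 0$ on $S^{2m-1}$: if $P_d(\xi_0)>0$ for some $\xi_0$, continuity gives $P_d\geq c_0>0$ on an open neighborhood $V_0$ of $\xi_0$, hence $p(t\xi)\geq\frac{c_0}{2}t^d$ for $\xi\in V_0$ and $t$ large. On the intersection of the cone over $V_0$ with $A^c$ and with the annulus $\{R\leq|x|\leq 2R\}$ (a set of measure $\gtrsim R^{2m}$ for $R$ large, since $|A|<\infty$), the pointwise bound
\[
e^{2mu(x)}\geq e^{-C}|x|^{2m(2\alpha-\ve/2)}\exp(mc_0|x|^d)
\]
forces $\int e^{2mu}=+\infty$, contradicting \eqref{area}. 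Since $P_d\not\equiv 0$ and $P_d(-\xi)=(-1)^dP_d(\xi)$, the nonpositivity $P_d\leq 0$ forces $d$ to be even. The case $d=0$ (so that $p=\mathrm{const}$ and $u=v+\mathrm{const}$) is excluded by the same estimate: the lower bound on $v$ on $A^c$ alone gives $\int e^{2mu}\gtrsim\int_R^\infty t^{2m-1+2m(2\alpha-\ve/2)}dt=+\infty$ for any $\alpha>0$. Hence $d\geq 2$.

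For $m\geq 3$ I would then set $Z:=\{\xi\in S^{2m-1}:P_d(\xi)=0\}$; since $P_d$ is a nontrivial homogeneous polynomial, $Z$ is a proper closed real-algebraic subvariety of $S^{2m-1}$ and thus $\dim^{\M{H}}(Z)\leq 2m-2$. On any compact $K\subset S^{2m-1}\bs Z$, $P_d\leq -c(K)<0$ uniformly; combined with uniform control of $P_0,\ldots,P_{d-1}$ on $K$ this yields $p(t\xi)\leq -\frac{c(K)}{2}t^d\leq -\delta(K)t^2$ for $\xi\in K$ and $t$ large, proving \eqref{pdelta2}.

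The main obstacle is the $m=2$ case, where we must further show $Z=\emptyset$, i.e.\ that the symmetric matrix $M$ defined by $P_2(\xi)=\xi^TM\xi$ is strictly negative definite. Suppose instead $\ker M\neq\{0\}$. Decomposing $\R{4}=\ker M\oplus(\ker M)^\perp$ with coordinates $x=(y,z)$, the polynomial takes the form $p(y,z)=p(0)+b_1\cdot y+b_2\cdot z+z^TM'z$ with $M'$ strictly negative. If $b_1\neq 0$, then along the direction of $b_1$ inside $\ker M$, $p$ grows linearly to $+\infty$; on the half-tube $\{b_1\cdot y\geq R,\,|z|\leq 1\}$, the factor $e^{4b_1\cdot y}$ dwarfs any polynomial in $|x|$ coming from $e^{4v}$, forcing $\int e^{4u}=+\infty$. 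If $b_1=0$, then $p$ is $y$-independent and bounded below on the tube $T:=\{|y|\geq R,\,|z|\leq 1\}$; since $|T|=\infty$, $|A|<\infty$, and $v\geq(2\alpha-\ve/2)\log|y|-C$ on $T\cap A^c$, one obtains $\int_{T\cap A^c}e^{4u}dx\gtrsim\int_{|y|\geq R}|y|^{8\alpha-2\ve}dy=+\infty$, again a contradiction. Hence $\ker M=\{0\}$, and \eqref{pdelta} follows.
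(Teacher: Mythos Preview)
Your argument is correct and follows essentially the same strategy as the paper: exhibit a set of finite measure off which $v$ is bounded below, and then force $\int e^{2mu}=\infty$ whenever the leading homogeneous part $P_d$ of $p$ fails to be strictly negative in the relevant directions. The paper uses the cruder bound $v\geq -C$ on the complement of a finite-measure set $A_0$ coming directly from $(-v)^+\in L^1$; your sharper logarithmic lower bound from \eqref{vR} works too but is not needed. For $m=2$ the paper diagonalizes the quadratic form and rules out $b_{i_0}\geq 0$ by looking at a half-slab $\{|x_i|\leq 1\ (i\neq i_0),\ c_{i_0}x_{i_0}\geq 0\}$, which absorbs your two sub-cases ($b_1\neq 0$ and $b_1=0$) into a single step; your kernel decomposition is a valid alternative. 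For the Hausdorff dimension of $Z$ you invoke the standard fact about proper real-algebraic subsets, whereas the paper spells out a short stratification argument via the implicit function theorem; both are fine.
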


\begin{proof} 
From \eqref{v++}, we infer that there is a set $A_0$ of finite measure such that
\begin{equation}\label{A0}
v(x)\geq -C\quad \textrm{in }\R{2m}\backslash A_0.
\end{equation}

\medskip

\noindent\emph{Case $m=2$.} Up to a rotation, we can write 
$$p(x)=\sum_{i=1}^4 (b_ix_i^2+c_ix_i)+b_0.$$
Assume that $b_{i_0}\geq 0$ for some $1\leq i_0\leq 4$. Then on the set
$$A_1:=\{x\in \R{4}:|x_i|\leq 1\textrm{ for }i\neq i_0,\; c_{i_0}x_{i_0}\geq 0\}$$
we have $p(x)\geq - C$. Moreover $|A_1|=+\infty$. Then, from \eqref{A0} we infer
\begin{equation}\label{areainf}
\int_{\R{4}}e^{4u}dx\geq\int_{A_1\backslash A_0}e^{4(v+p)}dx\geq C|A_1\backslash A_0|=+\infty,
\end{equation}
contradicting \eqref{area}. Therefore $b_i<0$ for every $i$ and \eqref{pdelta} follows at once.

\medskip

\noindent\emph{Case $m\geq 3$.} From \eqref{area} and \eqref{A0} we infer that $p$ cannot be constant. Write
$$p(t\xi)=\sum_{i=0}^{d}a_i(\xi)t^i,\qquad d:=\deg p,$$
where for each $0\leq i\leq d$, $a_i$ is a homogeneous polynomial of degree $i$ or $a_i\equiv 0$.
With a computation similar to \eqref{areainf}, \eqref{area} and \eqref{A0} imply that $a_d(\xi)\leq 0$ for each $\xi\in S^{2m-1}$. Moreover $d$ is even, otherwise $a_d(\xi)=-a_d(-\xi)\leq 0$ for every $\xi\in S^{2m-1}$, which would imply $a_d\equiv 0$. Set
$$Z=\{\xi\subset S^{2m-1}:a_d(\xi)=0\}.$$
We claim that $\dim^\M{H}(Z)\leq 2m-2$. To see that, set
$$V:=\{x\in \R{2m}\;:\;a_d(x)=0\}=\{t\xi\;:\; t\geq 0,\; \xi\in Z\}.$$
Since $V$ is a cone and $Z=V\cap S^{2m-1}$, we only need to show that $\dim^\M{H}(V)\leq 2m-1$. Set
$$V_i:=\{x\in \R{2m}\;:\; a_d(x)=\ldots= \nabla^i a_d(x)=0,\; \nabla^{i+1}a_d(x)\neq 0\}.$$
Noticing that $V_i=\emptyset$ for $i\geq d$ (otherwise $a_d\equiv 0$), we find
$V=\cup_{i=0}^{d-1}V_i.$
By the implicit function theorem, $\dim^{\M{H}}(V_i)\leq 2m-1$ for every $i\geq 0$ and the claim is proved.

Finally, for every compact set $K\subset S^{2m-1}\backslash Z$, there is a constant $\delta>0$ such that $a_d(\xi)\leq -\frac{\delta}{2}$, and since $d\geq 2$, \eqref{pdelta2} follows.
\end{proof}

\begin{cor}\label{uC} Any solution $u$ of \eqref{eq0}-\eqref{area} with $m=2$, $Q<0$ is bounded from above.
\end{cor}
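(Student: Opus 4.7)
The plan is simply to combine the three decomposition ingredients already proved for the $m=2$ case. By Lemma \ref{Deltapol} we may write $u=v+p$ where $p$ is a polynomial of degree at most $2$. By Lemma \ref{polinf}, since we are in dimension $4$, there is a $\delta>0$ and a constant $C$ such that
$$p(x)\leq -\delta|x|^2+C\quad\text{for all }x\in\R{4}.$$
In particular $p$ is bounded from above on all of $\R{4}$.

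For $v$, Lemma \ref{lemmabeta} supplies the logarithmic upper bound
$$v(x)\leq 2\alpha\log|x|+C\quad\text{for }|x|\geq 4.$$
Adding the two bounds gives, for $|x|\geq 4$,
$$u(x)=v(x)+p(x)\leq 2\alpha\log|x|-\delta|x|^2+C,$$
whose right-hand side is bounded above (in fact it tends to $-\infty$ as $|x|\to\infty$). On the complementary compact set $\overline{B_4}$, $u$ is smooth (as recalled in the introduction, every solution to \eqref{eq0} is smooth), hence bounded above there as well. Combining the two regimes yields $\sup_{\R{4}}u<+\infty$.

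There is essentially no obstacle here: the corollary is a direct bookkeeping consequence of Lemmas \ref{Deltapol}, \ref{polinf} and \ref{lemmabeta}, the only mild point being that the quadratic decay of $p$ in the $m=2$ case dominates the $2\alpha\log|x|$ growth allowed for $v$.
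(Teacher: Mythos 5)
Your argument is correct and is exactly the paper's proof: decompose $u=v+p$ via Lemma \ref{Deltapol}, bound $v$ by $2\alpha\log|x|+C$ via Lemma \ref{lemmabeta}, bound $p$ by $-\delta|x|^2+C$ via Lemma \ref{polinf}, and note that the quadratic decay dominates the logarithm. The only difference is that you spell out the (routine) treatment of the compact set $\overline{B_4}$, which the paper leaves implicit.
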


\begin{proof} Indeed $u=v+p$ and, for some $\delta>0$,
$$v(x)\leq 2\alpha\log|x|+C,\quad p(x)\leq -\delta|x|^2+C.$$
\end{proof}

\begin{lemma}\label{Deltav} Let $v:\R{2m}\to\R{}$ be defined as in \eqref{eqv} and $Z$ as in Lemma \ref{polinf}. Then for every $K\subset S^{2m-1}\backslash Z$ compact we have
\begin{equation}\label{dv}
\lim_{t\to+\infty}\Delta^{m-j} v(t\xi)=0, \quad j=1,\ldots,m-1
\end{equation}
for every $\xi\in K$ uniformly in $\xi$; for every $\ve>0$ there is $R=R(\ve,K)>0$ such that, for $t>R$, $\xi\in K$,
\begin{equation}\label{veps}
v(t\xi)\geq (2\alpha -\varepsilon)\log t
\end{equation}
\end{lemma}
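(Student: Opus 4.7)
The plan is to reduce both \eqref{dv} and \eqref{veps} to convolution-type integrals against $e^{2mu}$, and then exploit a Gaussian decay of $e^{2mu}$ on cones over compact subsets of $S^{2m-1}\bs Z$ that follows by combining Lemmas \ref{lemmabeta} and \ref{polinf}.

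First I enlarge $K$: since $Z$ is closed and disjoint from the compact $K$, there is a compact $K'\subset S^{2m-1}\bs Z$ with $K$ in the topological interior of $K'$ (take e.g.\ a small closed neighborhood of $K$ in $S^{2m-1}$). By Lemma \ref{polinf} applied to $K'$, there exist $\delta>0$ and $C>0$ such that $p(y)\leq-\delta|y|^2+C$ whenever $y/|y|\in K'$; combined with $v(y)\leq 2\alpha\log|y|+C$ from Lemma \ref{lemmabeta} this yields a Gaussian bound $e^{2mu(y)}\leq C|y|^{4m\alpha}e^{-2m\delta|y|^2}$ on the cone over $K'$. Since for $y\in B_1(t\xi)$ the unit vector $y/|y|$ lies within angle $\arcsin(1/(t-1))$ of $\xi$, for all $t$ larger than some $t_0=t_0(K,K')$ one has $y/|y|\in K'$ and hence $e^{2mu(y)}\leq Ce^{-m\delta t^2}$, uniformly in $\xi\in K$ and $y\in B_1(t\xi)$.

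For \eqref{dv}, differentiation under the integral sign in \eqref{eqv} is legitimate for $1\leq k\leq m-1$ because, by iterating $\Delta|x|^{-s}=s(s+2-2m)|x|^{-s-2}$ starting from $\Delta\log|x|=(2m-2)|x|^{-2}$, one finds $\Delta^k_x\log|x-y|=c_{m,k}|x-y|^{-2k}$ with $c_{m,k}\neq 0$ exactly for $k\leq m-1$, and this kernel is locally integrable precisely because $2k<2m$. Consequently $\Delta^{m-j}v(x)$ equals a nonzero constant times $\int_{\R{2m}}|x-y|^{-2(m-j)}e^{2mu(y)}\,dy$, and I estimate this at $x=t\xi$ by splitting $\R{2m}$ into $\{|y|\leq t/2\}$, $\{|y|>t/2\}\bs B_1(t\xi)$, and $B_1(t\xi)$. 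On the first region, $|t\xi-y|^{-2(m-j)}\leq Ct^{-2(m-j)}$ combined with $e^{2mu}\in L^1$ gives $O(t^{-2(m-j)})$; on the second, the kernel is bounded by $1$ and the contribution is dominated by the $L^1$-tail $\int_{|y|>t/2}e^{2mu}\,dy\to 0$; on the third, the uniform Gaussian bound on $e^{2mu}$ together with the convergent integral $\int_{B_1}|z|^{-2(m-j)}dz$ yields $Ce^{-m\delta t^2}\to 0$. All three estimates are uniform in $\xi\in K$.

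For \eqref{veps} I apply the lower bound \eqref{vR} from Lemma \ref{intv+} at $x=t\xi$:
\[
v(t\xi)\geq\Big(2\alpha-\tfrac{\varepsilon}{2}\Big)\log t+\frac{(2m-1)!}{\gamma_m}\int_{B_1(t\xi)}\log|t\xi-y|\,e^{2mu(y)}\,dy.
\]
The last integral is nonpositive, but the same Gaussian bound on $e^{2mu}$ in $B_1(t\xi)$ gives $\big|\int_{B_1(t\xi)}\log|t\xi-y|\,e^{2mu(y)}\,dy\big|\leq Ce^{-m\delta t^2}\int_{B_1}|\log|z||\,dz=o(1)$, which is negligible compared to $\tfrac{\varepsilon}{2}\log t$; hence \eqref{veps} holds for $t$ large. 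The main delicacy throughout is the uniformity in $\xi\in K$, and it is handled once and for all by the enlargement $K\subset\interior K'\subset K'\subset S^{2m-1}\bs Z$, after which the rest of the argument is routine convolution bookkeeping.
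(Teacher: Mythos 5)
Your proof is correct and follows essentially the same strategy as the paper: enlarge $K$ to a compact neighborhood $\widetilde K\subset S^{2m-1}\bs Z$, use Lemmas \ref{lemmabeta} and \ref{polinf} to control $e^{2mu}$ on the cone over $\widetilde K$, write $\Delta^{m-j}v$ as a constant times $\int|x-y|^{-2(m-j)}e^{2mu(y)}\,dy$ and split the integral into near and far regions, and deduce \eqref{veps} from \eqref{vR}. The only (immaterial) differences are that you use a three-way splitting with a quantitative Gaussian bound where the paper uses a two-way splitting ($B_\sigma(x)$ and its complement) with dominated convergence and the mere boundedness of $u$ on the cone.
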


\begin{proof} Fix $K\in S^{2m-1}\backslash Z$ compact and set
$\M{C}_K:=\{t\xi:t\geq 0, \xi\in K\}.$
For any $\sigma>0$, $1\leq j\leq 2m-1$,
\begin{equation}\label{leb}
\int_{\R{2m}\backslash B_\sigma(x)}\frac{e^{2mu(y)}}{|x-y|^{2j}}dy\to 0\quad\textrm{as }|x|\to\infty
\end{equation}
by dominated convergence. Choose a compact set $\widetilde K\subset S^{2m-1}\backslash Z$ such that $K\subset \interior(\widetilde K)\subset S^{2m-1}$. Since $u\leq C(\widetilde K)$ on $\M{C}_{\widetilde K}$ by Lemma \ref{lemmabeta} and Lemma \ref{polinf}, we can choose $\sigma=\sigma(\ve)>0$ so small that
$$\int_{B_\sigma(x)}\frac{e^{2mu}}{|x-y|^{2j}}dy\leq C(\widetilde K)\int_{B_\sigma(x)}\frac{1}{|x-y|^{2j}}dy\leq C(\widetilde K)\ve, \quad \textrm{for }x\in \M{C}_K, \;|x|\textrm{ large},$$
where $|x|$ is so large that $B_\sigma(x)\subset \M{C}_{\widetilde K}$.
Therefore
$$(-1)^{j+1}\Delta^j v(x)=C\int_{\R{2m}}\frac{e^{2mu}}{|x-y|^{2j}}dy\to 0,\quad \textrm{for }x\in\M{C}_K,\textrm{ as } |x|\to\infty,$$

We have seen in Lemma \ref{intv+}, that for any $\ve>0$ there is $R>0$ such that for $|x|\geq R$
\begin{equation}\label{211}
v(x)\geq \Big(2\alpha-\frac{\ve}{2}\Big)\log|x| +\frac{(2m-1)!}{\gamma_m}\int_{B_1(x)}\log |x-y|e^{2m u(y)}dy,
\end{equation}
and \eqref{veps} follows easily by choosing $\widetilde K$ as above and observing that $u\leq C(\widetilde K)$ on $\M{C}_{\widetilde K}$, hence on $B_1(x)$ for $x\in\M{C}_K$ with $|x|$ large enough.
\end{proof}

\medskip

\noindent\emph{Proof of Theorem \ref{clas1}.} The decomposition $u=v+p$ and the properties of $v$ and $p$ follow at once from Lemmas \ref{lemmabeta}, \ref{Deltapol}, \ref{polinf} and \ref{Deltav}; \eqref{Rg} follow as in \cite[Theorem 2]{mar}.
As for \eqref{deltaa}, let $j$ be the largest integer such that $\Delta^j p\not\equiv 0$. Then $\Delta^{j+1}p\equiv 0$ and from Theorem \ref{trmliou2} we infer that $\deg p=2j$, hence $\Delta^j p\equiv a\neq 0$.
\hfill $\square$

\section{The case $Q=0$}

\noindent\emph{Proof of Theorem \ref{zero}.} From Theorem \ref{trmliou2}, with $v\equiv 0$, we have that $u$ is a polynomial of degree at most $2m-2$. Then, as in \cite[Lemma 11]{mar}, we have 
$$\sup_{\R{2m}}u<+\infty,$$
and, since $u$ cannot be constant, we infer that $\deg u\geq 2$ is even. 
The proof of \eqref{aj} is analogous to the case $Q<0$, as long as we do not care about the sign of $a$. To show that $a<0$, one proceeds as in \cite[Theorem 2]{mar}.
For the case $m=2$ one proceeds as in Lemma \ref{polinf}, setting $v\equiv 0$ and $A_0=\emptyset$.
\hfill$\square$

\begin{ex} One might believe that every polynomial $p$ on $\R{2m}$ of degree at most $2m-2$ with $\int_{\R{2m}}e^{2mp}dx<\infty$ satisfies $\lim_{|x|\to\infty}p(x)=-\infty$, as in the case $m=2$.
Consider on $\R{2m}$, $m\geq 3$ the polynomial $u(x)=-(1+x_1^2)|\widetilde x|^2$, where $\widetilde x=(x_2,\ldots,x_{2m})$. Then $\Delta^m u\equiv 0$ and
\begin{eqnarray*}
\int_{\R{2m}}e^{2mu}dx&=&\int_{\R{}}\int_{\R{2m-1}}e^{-2m(1+x_1^2)|\widetilde x|^2}d\widetilde xdx_1\\
&=&\int_{\R{}}\frac{dx_1}{(1+x_1^2)^\frac{2m-1}{2}}\cdot\int_{\R{2m-1}}e^{-2m|\widetilde y|^2}d\widetilde y<+\infty.
\end{eqnarray*}
On the other hand, $\limsup_{|x|\to\infty}u(x)=0$.
\end{ex}

\section{Open questions}

\begin{open}
Does the claim of Corollary \ref{uC} hold for $m>2$? In other words, is any solution $u$ to \eqref{eq0}-\eqref{area} with $Q<0$ bounded from above?
\end{open}

This is an important regularity issue, in particular with regard to the behavior at infinity of the function $v$ defined in \eqref{eqv}. If $\sup_{\R{2m}} u<+\infty$, then one can take $Z=\emptyset$ in Theorem \ref{clas1}, as in the case $Q>0$, see \cite[Theorem 1]{mar}.

\begin{defin} Let $\M{P}_0^{2m}$ be the set of polynomials $p$ of degree at most $2m-2$ on $\R{2m}$ such that $e^{2mp}\in L^1(\R{2m})$. Let $\M{P}_+^{2m}$ be the set of polynomials $p$ of degree at most $2m-2$ on $\R{2m}$ such that there exists a solution $u=v+p$ to \eqref{eq0}-\eqref{area} with $Q>0$. Similarly for $\M{P}_-^{2m}$ with $Q<0$.
\end{defin}

Related to the first question is the following

\begin{open} What are the sets $\M{P}^{2m}_0$, $\M{P}^{2m}_{\pm}$? Is it true that $\M{P}_0^{2m}\subset \M{P}_+^{2m}$ and $\M{P}_0^{2m}\subset\M{P}_-^{2m}$?
\end{open}

J. Wei and D. Ye \cite{WY} proved that $\M{P}_0^4\subset \M{P}_+^4$ (and actually more). Consider now on $\R{2m}$, $m\geq 3$, the polynomial
$$p(x)=-(1+x_1^2)|\widetilde x|^2, \quad\widetilde x=(x_2,\ldots, x_{2m}).$$
As seen above, $e^{2mp}\in L^1(\R{2m})$, hence $p\in\M{P}^{2m}_0$. Assume that $p\in \M{P}^{2m}_-$ as well, i.e. there is a function $u=v+p$ satisfying \eqref{eq0}-\eqref{area} and $Q<0$. Then we claim that $\sup_{\R{2m}} u=\infty$. Assume by contradiction that $u$ is bounded from above. Then \eqref{eqlog} and \eqref{vR} imply that
$$v(x)=2\alpha \log |x|+o(\log |x|),\quad \textrm{as }|x|\to\infty.$$
Therefore,
$$\lim_{x_1\to \infty}u(x_1,0,\ldots,0)=\lim_{x_1\to\infty}2\alpha\log x_1=\infty,$$
contradiction.

\begin{open} Even in the case that $u$ is not bounded from above, is it true that one can take $Z=\emptyset$ in Theorem \ref{clas1} for $m\geq 3$ also?
\end{open}
For instance, in order to show that $v(x)=2\alpha\log|x|+o(\log|x|)$ as $|x|\to+\infty$, thanks to \eqref{vR}, it is enough to show that
$$\int_{B_1(x)}\log|x-y|e^{2mu(y)}dy=o(\log|x|),\quad \textrm{as }|x|\to +\infty,$$
which is true if $\sup_{\R{2m}} u<\infty$, but it might also be true if $\sup_{\R{2m}}u=\infty$.

\begin{open} What values can the $\alpha$ given by \eqref{eq0}-\eqref{area} assume for a fixed $Q$?
\end{open}
As usual, it is enough to consider $Q\in \{0,\pm(2m-1)!\}$. When $m=1$, $Q=1$, then $\alpha =1$, see \cite{CL}. When $m=2$, $Q=6$, then $\alpha$ can take any value in $(0,1]$, as shown in \cite{CC}. Moreover $\alpha$ cannot be greater than $1$ and the case $\alpha=1$ corresponds to standard solutions, as proved in \cite{lin}.  For the trivial case $Q=0$, $\alpha$ can take any positive value, and for the other cases we have no answer.



\begin{thebibliography}{2}
\bibitem[Cha]{cha} \textsc{S-Y. A. Chang} \emph{Non-linear Elliptic Equations in Conformal Geometry}, Zurich lecture notes in advanced mathematics, EMS (2004).
\bibitem[CC]{CC} \textsc{S-Y. A. Chang, W. Chen} \emph{A note on a class of higher order conformally covariant equations}, Discrete Contin. Dynam. Systems \textbf{63} (2001), 275-281.
\bibitem[CY]{CY} \textsc{S-Y. A. Chang, P. Yang} \emph{On uniqueness of solutions of $n$-th order differential equations in conformal geometry}, Math. Res. Lett. \textbf{4} (1997), 91-102.
\bibitem[CL]{CL} \textsc{W. Chen, C. Li} \emph{Classification of solutions of some nonlinear elliptic equations}, Duke Math. J. \textbf{63} (3) (1991), 615-622.
\bibitem[DR]{DR} \textsc{O. Druet, F. Robert} \emph{Bubbling phenomena for fourth-order four-dimensional PDEs with exponential growth}, Proc. Amer. Math. Soc \textbf{3} (2006), 897-908.
\bibitem[LS]{LS} \textsc{Y. Li, I. Shafrir} \emph{Blow-up analysis for solutions of $-\Delta u=Ve^u$ in dimension two}, Indiana Univ. Math. J. \textbf{43} (1994), 1255-1270.
\bibitem[Lin]{lin} \textsc{C. S. Lin} \emph{A classification of solutions of conformally invariant fourth order equations in $\R{n}$}, Comm. Math. Helv \textbf{73} (1998), 206-231.
\bibitem[Mal]{mal} \textsc{A. Malchiodi} \emph{Compactness of solutions to some geometric fourth-order equations}, J. reine angew. Math. \textbf{594} (2006), 137-174.
\bibitem[MS]{MS} \textsc{A. Malchiodi, M. Struwe} \emph{$Q$-curvature flow on $S^4$}, J. Diff. Geom. \textbf{73} (2006), 1-44.
\bibitem[Mar1]{mar} \textsc{L. Martinazzi} \emph{Classifications of solutions to the higher order Liouville's equation in $\R{2m}$}, preprint (2007).
\bibitem[Mar2]{mar2} \textsc{L. Martinazzi} \emph{Concentration-compactness phenomena in higher order Liouville's equation}, in preparation.
\bibitem[Ndi]{ndi} \textsc{C. B. Ndiaye} \emph{Constant $Q$-curvature metrics in arbitrary dimension}, J. Func. Analysis \textbf{251} no.1 (2007), 1-58.
\bibitem[RS]{RS} \textsc{F. Robert, M. Struwe} \emph{Asymptotic profile for a fourth order PDE with critical exponential growth in dimension four}, Adv. Nonlin. Stud. \textbf{4} (2004), 397-415.
\bibitem[Str1]{str1} \textsc{M. Struwe} \emph{A flow approach to Nirenberg's problem}, Duke Math. J. \textbf{128}(1) (2005), 19-64.
\bibitem[Str2]{str2} \textsc{M. Struwe} \emph{Quantization for a fourth order equation with critical exponential growth}, Math. Z. \textbf{256} (2007), 397-424.
\bibitem[WX]{WX} \textsc{J. Wei, X-W. Xu} \emph{Classification of solutions of higher order conformally invariant equations}, Math. Ann. \textbf{313} (1999), 207-228.
\bibitem[WY]{WY} \textsc{J. Wei, D. Ye} \emph{Nonradial solutions for a conformally invariant fourth order equation in $\R{4}$}, preprint (2006).
\bibitem[Xu]{xu} \textsc{X-W. Xu} \emph{Uniqueness theorem for integral equations and its application}, J. Funct. Anal. \textbf{247} (2007), no. 1, 95-109.
\end{thebibliography}
\end{document}